\newtheorem{theorem}{Theorem}
\newtheorem{lemma}[theorem]{Lemma}
\newtheorem{corollary}[theorem]{Corollary}
\newtheorem{prop}[theorem]{Proposition}
\newtheorem*{theorem*}{Theorem}
\newtheorem*{corollary*}{Corollary}
\theoremstyle{definition}
\newtheorem{definition}[theorem]{Definition}
\newtheorem*{remark*}{Remark}
\newtheorem{question}[theorem]{Question}
\newtheorem{observation}[theorem]{Observation}
\newtheorem*{definition*}{Definition}
\newtheorem*{example*}{Example}
\numberwithin{theorem}{section}
\newcommand{\BC}{\mathbb C} 
\newcommand{\BR}{\mathbb R} 
\newcommand{\BN}{\mathbb N} \newcommand{\BQ}{\mathbb Q}
\newcommand{\BS}{\mathbb S} \newcommand{\BZ}{\mathbb Z}
 \newcommand{\BT}{\mathbb T}
\newcommand{\BX}{\mathbb X}
 \newcommand{\CB}{\mathcal B}
\newcommand{\CC}{\mathcal C} \newcommand{\calD}{\mathcal D}
\newcommand{\CG}{\mathcal G}
 \newcommand{\CR}{\mathcal R}
\newcommand{\CS}{\mathcal S} \newcommand{\CT}{\mathcal T}
\newcommand{\CU}{\mathcal U}
\newcommand{\aut}{\textup{Aut}(F_n)}
\newcommand{\wt}{\widetilde}
\newcommand{\nid}{\noindent}
\newcommand{\tup}{\textup}
\DeclareMathOperator{\trace}{Tr}
\DeclareMathOperator{\rank}{rank}
\newcommand{\comment}[1]{}
\patchcmd{\epigraph}{\@epitext{#1}}{\itshape\@epitext{#1}}{}{}
\begin{document}
\title  [Non--solvable representations]  {Non virtually solvable subgroups of mapping class groups have non virtually solvable representations}
\author   {Asaf Hadari}
\date{\today}
\begin{abstract} Let $\Sigma$ be a compact orientable surface of finite type with at least one boundary component. Let $\Gamma \leq \tup{Mod}(\Sigma)$ be a non virtually solvable subgroup. We answer a question of Lubotzky by showing that there exists a finite dimensional homological representation $\rho$ of $\tup{Mod}(\Sigma)$ such that $\rho(\Gamma)$ is not virtually solvable. We then apply results of Lubotzky and Meiri to show that for any random walk on such a group the probability of landing on a power, or on an element with topological entropy $0$ both decrease exponentially in the length of the walk.\end{abstract}

\maketitle

\section{introduction}

Let $\Sigma$ be an orientable compact surface of finite type, possibly with boundary. The mapping class group of $\Sigma$, or $\tup{Mod}(\Sigma)$, is the group of orientation preserving diffeomorphisms of $\Sigma$ that fix the boundary point wise, up to isotopies that fix the boundary point wise. A number of years ago, Alexander Lubotzky asked us the following question. 

\begin{question} \label{lq} \tup{(Lubotzky)} Given a non virtually solvable subgroup $\Gamma < \tup{Mod}(\Sigma)$, is there a finite dimensional representation $\rho: \tup{Mod}(\Sigma) \to \tup{GL}(V)$ such that $\rho(\Gamma)$ is not virtually solvable?  
\end{question}

As we  discuss below, Lubotzky's question was asked in the context of trying to describe probabilistic properties of elements in subgroups of $\tup{Mod}(\Sigma)$. However, the question fits into a larger meta-question.

\begin{question} \label{urq} Which properties of the mapping class group can we discern in its representations? 
\end{question}

The answer to this larger question is important whenever one wants to apply representation theory to study mapping class groups. This larger question is quite difficult to answer. For instance it is not known in general 
whether or not mapping class groups are linear, that is - whether or not they have any faithful finite dimensional representations. 

In this paper, we provide a positive answer to Lubotzky's question in the case where $\Sigma$ has non-empty boundary. The class of representations we consider are called \emph{homological representations}, which we now describe. Pick a point $* \in \partial \Sigma$. Since elements of $\tup{Mod}(\Sigma)$ are diffeomorphisms that fix $*$ up to isotopies that fix $*$, we get an injection $\tup{Mod}(\Sigma) \hookrightarrow \tup{Aut}(\pi_1(\Sigma, *))$. Let $p: \Sigma' \to \Sigma$ be a finite cover corresponding to the subgroup $K < \pi_1(\Sigma, *)$. The subgroup $\tup{Mod}_K(\Sigma) = \{f \in \tup{Mod}(\Sigma): f(K)= K \}$ is a finite index subgroup of $\tup{Mod}(\Sigma)$. This gives a map: $\tup{Mod}_K(\Sigma) \hookrightarrow \tup{Aut}(K)$. There is a natural map $\tup{Aut}(K) \to \tup{Aut}(H_1(K, \BC)) \cong \tup{GL}(H_1(K, \BC))$ given by the taking the induced action of  a homomorphism of $K$ on $\frac{K}{[K, K]} \otimes \BC$. The composition of the above maps gives a homomorphism $\rho_p: \tup{Mod}_K(\Sigma) \to \tup{GL}(H_1(K, \BC))$ called a \emph{homological representation}. Inducing this representation to $\tup{Mod}(\Sigma)$ gives a homological representation of $\tup{Mod}(\Sigma)$. We prove the following result.

\begin{theorem} \label{theorem1}
Let  $\Sigma$ be a compact orientable surface of finite type with at least one boundary component. Let $\Gamma \leq \tup{Mod}(\Sigma)$ be a non virtually solvable subgroup. Then there exists a finite cover $p: \Sigma' \to \Sigma$  such that the image $\rho_p(\Gamma)$ of $\Gamma$ under the homological representation $\rho_p$ is not virtually solvable. Furthermore, if $\Gamma$ contains a pseudo-Anosov element then this cover can be taken to be a regular cover with a solvable deck group.

\end{theorem}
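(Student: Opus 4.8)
The plan is to reduce the problem to a statement about the action of $\Gamma$ on the homology of finite covers of $\Sigma$, and to exploit the residual finiteness of surface groups together with a ``Magnus-type'' faithfulness result for the homological representations.

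First I would handle the general case by contradiction: suppose that for \emph{every} finite cover $p: \Sigma' \to \Sigma$ the image $\rho_p(\Gamma)$ is virtually solvable. The key point is that the system of homological representations $\{\rho_p\}$, indexed by finite-index characteristic subgroups $K < \pi_1(\Sigma,*)$, is in a suitable sense asymptotically faithful: by residual finiteness of $\pi_1(\Sigma,*)$ (indeed of free groups, since $\Sigma$ has boundary so $\pi_1(\Sigma,*) = F_n$), any nontrivial element of $\pi_1(\Sigma,*)$ survives in some finite quotient, and hence acts nontrivially on $H_1(K,\BC)$ for $K$ deep enough in the derived/lower-central filtration. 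Combined with the embedding $\tup{Mod}(\Sigma) \into \tup{Aut}(F_n)$, this should force: if $\rho_p(\Gamma)$ is virtually solvable for all $p$, then $\Gamma$ is itself virtually solvable, a contradiction. Making the quantifier ``for all $p$'' interact correctly with ``virtually'' (the finite-index subgroup could depend on $p$) is the first technical hurdle; I expect to handle it using a Tits-alternative-type dichotomy for subgroups of $\tup{Mod}(\Sigma)$ (by Ivanov/McCarthy, $\Gamma$ either is virtually solvable or contains a free group of rank two, or more precisely contains a pair of independent pseudo-Anosovs or reducible elements generating a free group), thereby reducing to showing that such a free subgroup has non virtually solvable image under some $\rho_p$.

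For the ``furthermore'' clause, the extra hypothesis that $\Gamma$ contains a pseudo-Anosov $\varphi$ lets me ask for a \emph{regular} cover with \emph{solvable} deck group. Here the natural construction is to take $K$ to be a term of the derived series of $\pi_1(\Sigma,*)$ (so the deck group $\pi_1(\Sigma,*)/K$ is solvable, in fact metabelian or a higher solvable quotient of a free group) and to use the fact, going back to work on the Magnus representation and the Burau-type specializations, that a pseudo-Anosov acts on $H_1(K,\BC)$ with a spectral radius (dilatation-type eigenvalue) bigger than $1$; this already shows $\rho_p(\varphi)$ has infinite order with an eigenvalue off the unit circle, so it cannot lie in a virtually solvable group that is also, say, unitarizable — but more work is needed since virtually solvable linear groups certainly can contain elements with eigenvalues off the unit circle. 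The real input is that $\Gamma$ being non virtually solvable and containing a pseudo-Anosov forces, via the Tits alternative again, a free subgroup $\langle \varphi, \psi \rangle$ with $\varphi$ pseudo-Anosov; then one argues that for $K$ deep enough in the derived series the images $\rho_p(\varphi), \rho_p(\psi)$ generate a non virtually solvable (indeed nonabelian free-by-something) subgroup of $\tup{GL}(H_1(K,\BC))$, using a ping-pong argument on the homology transported from a ping-pong argument on the curve complex or on $\partial F_n$.

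\textbf{Main obstacle.} The hard part will be the ``asymptotic faithfulness on a fixed $\Gamma$'' step: translating the dynamical richness of a non virtually solvable $\Gamma \leq \tup{Mod}(\Sigma)$ (free subgroups, pseudo-Anosov behavior) into a \emph{single} finite cover $p$ on whose homology $\Gamma$ acts in a non virtually solvable way. Asymptotic faithfulness of the whole family is comparatively soft (residual finiteness plus linear algebra over the lower central series), but uniformity — pinning down one $K$ that works — will likely require either a compactness/pigeonhole argument over the (profinite) system of covers, or an explicit construction of $K$ from the axes of a free subgroup of $\Gamma$, e.g. by choosing $K$ to separate a specified finite set of elements and their relevant commutators. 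I expect the proof to build $K$ explicitly in the pseudo-Anosov case (derived series cover, controlled by dilatation) and to use a limiting argument in the general case, possibly after first replacing $\Gamma$ by a well-chosen two-generator free subgroup supplied by the Tits alternative for mapping class groups.
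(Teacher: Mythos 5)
Your high-level outline correctly identifies several of the right ingredients: arguing by contradiction, using the Tits alternative for mapping class groups to locate a free subgroup with pseudo-Anosov generators, and exploiting the fact that a pseudo-Anosov acting on $H_1(K,\BC)$ for a deep enough abelian/solvable cover picks up an eigenvalue off the unit circle (this is exactly Theorem \ref{specrad}). You also correctly flag the hard part: converting ``eigenvalue off the unit circle'' into ``non virtually solvable image,'' and the danger that the ``virtually'' can leak away when quantifying over all covers. But the proposal stops where the paper's real work begins, and the two mechanisms you gesture at would not close the gap.

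First, the ``asymptotic faithfulness'' idea is not sufficient. Residual finiteness (or residual torsion-free nilpotence) gives you that nontrivial elements survive in some $\rho_p$, but ``virtually solvable'' is not detected elementwise: there is no contradiction in a nonabelian free group having virtually solvable image under every single $\rho_p$, with the relevant finite-index subgroup drifting with $p$. You acknowledge this (``the finite-index subgroup could depend on $p$'') but propose to handle it by a compactness or limiting argument over the profinite system, which you do not carry out and which is not how the difficulty is actually resolved. The paper's solution is a \emph{rigidity argument} (Lemmas \ref{boundpowers}--\ref{deformation2}) proving that, among $\Gamma_0$-admissible abelian covers, if $\rho_p(\Gamma_0)$ is virtually solvable for all $p$ and can be conjugated into $\CB$ for one $p$, then $\rho_p(\Gamma_0)$ itself (not merely a finite-index subgroup) can be conjugated into $\CB$ for every $p$; this uses the structure of the Magnus matrix as a matrix over $\BC[X_1^{\pm1},\dots,X_n^{\pm1}]$ and a deformation/Zariski-density argument over the character torus. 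Without this uniform control over the finite index, the contradiction does not materialize.

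Second, the step that actually produces the contradiction is not a ping-pong on homology. The mechanism is algebraic: once you know $\rho_p(\Gamma_0)$ is conjugate into $\CB$ for every admissible $p$, an element $h \in [\Gamma_0,\Gamma_0]$ must land in $[\CB,\CB]=\CU$, whose elements have all eigenvalues equal to $1$; choosing $h$ pseudo-Anosov (via Fujiwara's theorem on products of independent pseudo-Anosovs) and applying the admissible-cover version of Theorem \ref{specrad} (Proposition \ref{eigoffuc}) gives an eigenvalue off the unit circle, a contradiction. Your proposal names the spectral-radius input but then falls back to a transported ping-pong argument, which you do not justify and which does not obviously interact with the ``virtually'' quantifier. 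Finally, you omit the reduction to a single invariant subsurface (Clay--Leininger--Mangahas plus Marshall Hall's free-factor lemma) that is needed when $\Gamma$ has no pseudo-Anosov element; the general case is not handled directly by your framework, only the pseudo-Anosov case. In short: the right pieces are named, but the key lemma --- rigidity of ``conjugate into $\CB$'' across admissible abelian covers --- is missing, and without it your contradiction does not go through.
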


\nid One key element in our proof is our proof in \cite{eigoffuc} of the following result, which answers a question of McMullen (\cite{Mcm}) and also addresses Question \ref{urq}. 

\begin{theorem} \label{specrad} \tup{(Hadari)} Let  $\Sigma$ be a compact orientable surface of finite type with at least one boundary component. Let $f\in \tup{Mod}(\Sigma)$ be a mapping class with positive topological entropy. Then there exists a finite cover $p: \Sigma' \to \Sigma$ such that $\rho_p(f)$ has eigenvalues off the unit circle. Furthermore, if $f$ is pseudo-Anosov then this cover can be taken to be a regular cover with a solvable deck group. 
\end{theorem}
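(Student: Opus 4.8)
\noindent The plan is to reduce to a pseudo-Anosov map on a subsurface, prove the statement there by exhibiting the dilatation inside the first homology of the double cover that orients the invariant foliation, and then carry the eigenvalue back up to $\Sigma$.

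\noindent\textbf{Reduction to the pseudo-Anosov case.} Since the topological entropy of a mapping class equals the logarithm of the largest dilatation occurring among the pseudo-Anosov pieces of its Nielsen--Thurston decomposition, positivity of the entropy of $f$ means that some power $f^N$ has an $f^N$-invariant essential subsurface $\Sigma_0\subseteq\Sigma$ on which it restricts to a pseudo-Anosov map $f_0$. Suppose a finite cover $p_0\colon\Sigma_0'\to\Sigma_0$ has $\rho_{p_0}(f_0)$ with an eigenvalue off the unit circle. As $\pi_1(\Sigma)$ is free and finitely generated subgroups of free groups are virtual retracts, $p_0$ extends to a finite cover of $\Sigma$; intersecting the corresponding subgroup with its $\langle f\rangle$-translates makes the cover $p\colon\Sigma'\to\Sigma$ $f$-invariant, and taking $\Sigma_0'$ to be a full preimage makes it invariant under the chosen lift of $f^N$. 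Then excision presents $H_1(\Sigma_0',\partial_0\Sigma_0';\BC)$ as an $f^N$-equivariant quotient of $H_1(\Sigma';\BC)$, Lefschetz duality identifies it with $H^1(\Sigma_0';\BC)$, whose eigenvalues are reciprocal to those on $H_1(\Sigma_0';\BC)$; hence $\rho_p(f)^N=\rho_p(f^N)$ has an eigenvalue off the unit circle, and so does $\rho_p(f)$, since a number off the unit circle has all its $N$-th roots off the unit circle. This step need not preserve regularity, which is why the ``furthermore'' is claimed only for $f$ pseudo-Anosov.

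\noindent\textbf{The pseudo-Anosov case.} Let $\lambda>1$ be the dilatation of $f$. Realize the structure by a quadratic differential $q$ on the associated punctured surface; its horizontal foliation $\mathcal F$ is minimal and uniquely ergodic, and its transverse measure is $|\omega|$ for the closed real $1$-form $\omega=\mathrm{Im}(\sqrt q\,dz)$, which exists globally exactly when the orientation class $w=w_1(q)\in H^1(\Sigma;\BZ/2)$ vanishes, while in any case $f^*\omega=\pm\lambda^{\pm1}\omega$ off the singular set. If $w=0$ then $[\omega]\in H^1(\Sigma;\BR)$ satisfies $f^*[\omega]=\pm\lambda^{\pm1}[\omega]$, and $[\omega]\neq 0$: otherwise $\omega=dh$ with $h\colon\Sigma\to\BR$ continuous and constant along leaves of $\mathcal F$, so a dense leaf would lie in a single level set of $h$, forcing $h$ and hence $\omega$ to be constant --- absurd. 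Thus $\rho(f)$ itself already has an eigenvalue off the unit circle and no cover is needed. If $w\neq 0$, then $f^*w=w$ because $f$ preserves $\mathcal F$, so the index-two cover $p\colon\Sigma'\to\Sigma$ that $w$ determines is regular, $f$-invariant, with deck group $\BZ/2$, which is solvable; on $\Sigma'$ the lift of $\mathcal F$ is transversely orientable and still minimal and uniquely ergodic (its transverse measure pushes forward to the full-support measure below, hence itself has full support), so the same argument produces a nonzero class in $H^1(\Sigma';\BR)$ that the canonical lift of $f$ scales by $\pm\lambda^{\pm1}$ --- an eigenvalue of $\rho_p(f)$ off the unit circle.

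\noindent\textbf{Where the difficulty lies.} The conceptual heart --- that the transverse measure of a pseudo-Anosov foliation defines a cohomology class scaled by $\lambda^{\pm1}$, and that minimality forbids that class from vanishing --- is short. The labor is in the bookkeeping. For a surface with boundary one must set up the quadratic-differential/foliation picture through the punctured surface with proper control of the boundary prongs, so that ``$\omega$ is a genuine closed form once $w=0$'' and ``$\mathcal F$ is minimal'' hold in exactly the form used; this is the point I expect to absorb most of the care. The extension/restriction step must likewise be executed so that the cover is genuinely $f$-invariant (not merely $f^N$-invariant) and so that the eigenvalue truly survives the excision--duality comparison. I do not expect a genuine obstruction in either place, but together they explain why a regular cover with solvable deck group is obtained only in the case where no reduction is required.
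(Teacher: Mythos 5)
There is a genuine gap at the center of the pseudo-Anosov case, and it is not a bookkeeping issue: it is exactly the obstruction that makes the theorem nontrivial.

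You write that the transverse $1$-form $\omega=\mathrm{Im}(\sqrt{q}\,dz)$ exists globally precisely when a class $w=w_1(q)\in H^1(\Sigma;\BZ/2)$ vanishes, and when $w\neq 0$ you pass to ``the index-two cover that $w$ determines.'' But the orientation class of the invariant foliation is naturally an element of $H^1\bigl(\Sigma\setminus\mathrm{Sing}(q);\BZ/2\bigr)$, and it does \emph{not} in general lie in the image of $H^1(\Sigma;\BZ/2)$. Near a zero of $q$ of odd order $k$ (a $(k+2)$-prong with $k$ odd, e.g.\ a $3$-prong), the holonomy of $\mathcal{F}$ around the zero reverses transverse orientation, so the orientation class evaluates nontrivially on a small loop encircling that zero; since such a loop bounds a disk in $\Sigma$, no class pulled back from $H^1(\Sigma;\BZ/2)$ can restrict nontrivially to it. The same computation shows that \emph{no} unbranched finite cover of $\Sigma$ can orient $\mathcal{F}$: any finite cover restricted to a punctured neighborhood of the zero is a disjoint union of trivially covered punctured disks, so the loop around a preimage is still bad. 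The ``orienting double cover'' in this situation is the double cover \emph{branched} over the odd-order zeros, which is not a covering space and not in the domain of $\rho_p$. Odd-order interior zeros are completely typical (for instance, pseudo-Anosov monodromies of $4$-strand braids on the punctured disk carry a single interior $3$-prong, and more generally the parity constraint $\sum k_i\equiv 4g-4+2n\ (\mathrm{mod}\ 2)$ often forces at least one). So the case that your argument silently omits --- $w$ not a well-defined class in $H^1(\Sigma;\BZ/2)$ --- is the main case. Your own ``where the difficulty lies'' paragraph worries about boundary prongs and minimality but not about this, which suggests the gap was not noticed. The remainder of your pA argument (that $\omega$ vanishes along $\partial\Sigma$ because $f^*\omega=\pm\lambda^{\pm1}\omega$ while $f$ fixes $\partial\Sigma$, that exactness would contradict minimality, and that the resulting class is not a boundary class) is fine \emph{given} an honest unbranched cover on which $\mathcal{F}$ lifts orientably --- but that is exactly what is unavailable.

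For context, the paper's proof of this theorem is not contained in the present article at all: Theorem~\ref{specrad} is cited from \cite{eigoffuc}, and Section~3 here only recalls its machinery. That argument sidesteps the foliation picture entirely. It takes a Bestvina--Handel train track representative $\varphi$, forms its transition graph $\CT$, lifts to the $\Gamma$-equivariant universal abelian cover, studies the support of $\mathrm{Tr}\,A_\Gamma^k$ inside the ``equivariant shadow'' polytope $\CS^e\varphi$, and then uses residual torsion-free nilpotence of $\BZ\rtimes_h F_n$ (Koberda) to build a finite tower of abelian covers in which a chosen vertex subgraph $\CT_v$ becomes stable; stability forces an eigenvalue off the unit circle, and the tower construction is what yields a regular cover with solvable (iterated abelian) deck group. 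That machinery is precisely what replaces the unavailable orienting double cover, and it is why both \cite{eigoffuc} and Liu's independent proof \cite{YiLiu} require substantial work rather than a two-paragraph reduction.
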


\nid Nearly concurrently with \cite{eigoffuc},  Yi Liu provided a different proof of a similar result to Theorem \ref{specrad} (see \cite{YiLiu}). Liu's proof covers the case where $\Sigma$ is a closed surface as well. However, his proof doesn't provide a cover with a solvable deck group. The solvability of the deck group is an important requirement for the proof appearing in this paper. 

\subsection{Theorem \ref{theorem1} and random elements in subgroups of $\tup{Mod}(\Sigma)$} Let $G$ be a finitely generated group, and let $S$ be a finite symmetric generating set. Let $\mu$ be the uniform measure on $S$. For any $X \subset G$ and any $k > 0$ let $\tup{prob}_k(X) = \mu^{\star k}(X)$, that is - the probability that the random walk defined by $S$ lands in $X$ after k steps. We say that the set $X$ is \emph{exponentially small in G} if there exists numbers $C > 0, b>1 $ such that $\tup{prob}_k(X) < Cb^{-k}$ for every $k$. 

In \cite{LMeiri1}, Lubotzky and Meiri use sieve methods on groups to study exponentially small sets in non-solvable groups. They then apply these results to mapping class groups in \cite{LMeiri2}, and to $\tup{Aut}(F_n)$ in \cite{LMeiri3}.  The motivation for asking question \ref{lq} was to extend this study to arbitrary subgroups of the mapping class group. One of the main results in their paper \cite{LMeiri1} is the following.  

\begin{theorem} \label{pes} \tup{(Lubotzky, Meiri)} If $G < \tup{GL}_n(\BC)$ is finitely generated and not virtually solvable then the set $\bigcup_{m=2}^\infty G^m$ is exponentially small in G. 
\end{theorem}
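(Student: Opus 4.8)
I would prove this by the sieve method of Lubotzky and Meiri, combining strong approximation with the expansion of congruence quotients. The first step is a chain of standard reductions. Let $\mathbf G$ be the Zariski closure of $G$ and $\mathbf R$ its solvable radical; the quotient $q\colon G\to \bar G:=G/(G\cap\mathbf R)$ is finitely generated and Zariski-dense in a group whose identity component is semisimple, and since $q(G^m)=\bar G^m$ for every $m$ and exponential smallness pulls back along surjections (the image of the walk is a walk with finitely supported symmetric step distribution, for which exponential smallness is the same as for the standard generating set), it suffices to treat $\bar G$. So assume $G$ is Zariski-dense in a semisimple group defined over a number field $K$ (or a finitely generated domain). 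By strong approximation, for a positive-density set of primes $\mathfrak p$ the reduction $\pi_{\mathfrak p}\colon G\to\mathbf G(\BF_{\mathfrak p})$ is onto up to bounded index; by super-strong approximation (Salehi-Golsefidy--Varj\'u, after Bourgain--Gamburd) the family of congruence quotients of $G$ is an expander family, and this is what upgrades the classical large sieve to an exponential bound: a set that is uniformly thin in a sufficiently rich family of congruence quotients is exponentially small in $G$.

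The obstruction to applying this directly is that $\bigcup_{m\ge2}G^m=\bigcup_{\ell\ \text{prime}}G^\ell$ is \emph{not} thin in any single finite quotient --- every element of a finite group is an $\ell$-th power for any prime $\ell$ coprime to its order, so $\pi_{\mathfrak p}\!\left(\bigcup_mG^m\right)=\mathbf G(\BF_{\mathfrak p})$. One must therefore split the proper powers of word length $\le k$ into three families. (i) \emph{Torsion elements}: these have order bounded in terms of $G$ (their eigenvalues are roots of unity of bounded degree over a number field), so $\pi_{\mathfrak p}(g)$ has bounded order, and bounded-order elements of $\mathbf G(\BF_{\mathfrak p})$ have density $\to 0$ (they lie in maximal tori, of which there are $O(p^{\dim\mathbf G-\rank\mathbf G})$, each containing $O(1)$ of them) --- exponentially small. (ii) \emph{Infinite-order virtually unipotent elements} (eigenvalues all roots of unity): here $\pi_{\mathfrak p}(g)$ is unipotent up to bounded order, and such elements also have density $\to 0$ in $\mathbf G(\BF_{\mathfrak p})$ --- exponentially small. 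Note that these first two families are exactly the ones for which the exponent $\ell$ is uncontrolled (a unipotent element is an $\ell$-th power of a unipotent element for every $\ell$), which is why they must be disposed of by an argument that never mentions $\ell$. (iii) \emph{Everything else}: if $g$ has infinite order and an eigenvalue $\alpha$ that is not a root of unity, and $g=h^\ell$ with $h\in G$, then $\alpha=\zeta\gamma^\ell$ for some root of unity $\zeta$ and some $\gamma$ in a degree-$\le n$ extension of $K$; the multiplicative group of a finitely generated field being free abelian modulo roots of unity, a fixed non-torsion $\alpha$ is divisible (in that group) by only finitely many $\ell$, and the largest such $\ell$ is controlled by the height of $\alpha$, hence by $\|g\|$, hence by $|g|_S$. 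So $\ell\le\Lambda(k)$ with $\Lambda$ at most polynomial (in fact linear) in $k$, and family (iii) is contained in $\bigcup_{2\le\ell\le\Lambda(k),\ \ell\ \text{prime}}G^\ell$.

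It remains to bound each $G^\ell$, uniformly in $\ell\le\Lambda(k)$. For a single prime $\ell$ one sieves both ``horizontally'' and ``vertically''. Horizontally: for primes $\mathfrak p$ with $\ell\mid\#\mathbf G(\BF_{\mathfrak p})$ through the order of a maximal torus --- for instance $p\equiv1\pmod\ell$, of which Dirichlet supplies plenty below the reach of the length-$k$ walk --- the $\ell$-th powers in $\mathbf G(\BF_{\mathfrak p})$ have density $\le1-c$ for an absolute $c>0$, because restricted to the (positive proportion of) regular semisimple elements lying in tori of order divisible by $\ell$, the $\ell$-th powers form a subgroup of index $\ell$. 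Vertically: in the $\lambda$-adic congruence tower over a prime $\lambda\mid\ell$ of $K$, the $\ell$-th power map on each principal congruence layer has image of bounded index, so $G^\ell$ omits a bounded fraction of each of the $\Theta(k)$ congruence layers reachable in $k$ steps, and this supply of $\Theta(k)$ essentially independent constraints is what pushes the decay from sub-exponential to genuinely exponential, $\tup{prob}_k(G^\ell)<C'b^{-k}$. Summing over the $\le\Lambda(k)$ primes $\ell$ and adding the contributions of families (i) and (ii) gives $\tup{prob}_k\!\left(\bigcup_{m\ge2}G^m\right)\le C\Lambda(k)b^{-k}$, which is exponentially small since $\Lambda(k)$ is polynomial.

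The main obstacle --- and the reason the three-way split is forced --- is the tension between the range of exponents and the uniformity of the per-exponent estimate. On the one hand, because the union of all $G^\ell$ fills every finite quotient, one cannot sieve it as a whole and must first peel off the (virtually) unipotent and torsion elements, which are precisely the ones whose exponent cannot be bounded, and handle them separately by ``thinness modulo $\mathfrak p$''. On the other hand, having cut the remaining exponents down to a polynomially-bounded range, one needs the sieve bound for $G^\ell$ to be genuinely exponential with constants \emph{independent of $\ell$}, so that the sum over $\ell$ survives; this requires the uniform expansion constant from super-strong approximation together with the uniform lower bound, independent of $\ell$, on the density of non-$\ell$-th-powers in the relevant congruence quotients --- the group-theoretic and $p$-adic heart of the argument.
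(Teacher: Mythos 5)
This statement is not proved in the paper at all: it is quoted, with the attribution ``(Lubotzky, Meiri)'', as a known result from \cite{LMeiri1}, and it enters only so that it can be combined with Theorem \ref{theorem1} to deduce Corollary \ref{pesmcg}. There is therefore no in-paper proof to compare your proposal against; the only meaningful comparison is with the Lubotzky--Meiri argument itself.

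Measured against that, your sketch captures the published skeleton accurately: quotient by the solvable radical to reduce to a finitely generated Zariski-dense subgroup of a semisimple group; observe that $\bigcup_{m\ge 2}G^m$ fills every finite quotient, which forces a decomposition; dispose of the (virtually) unipotent elements, torsion included, by thinness modulo $\mathfrak p$ (the paper under review itself recalls that Lubotzky--Meiri obtain this from Bourgain--Gamburd--Sarnak); for the remaining elements bound the exponent $\ell$ polynomially in the word length via a height estimate on an eigenvalue that is not a root of unity; then sieve each $G^\ell$ with constants uniform in $\ell$ and sum. One part of your write-up should be corrected, though. In Lubotzky--Meiri the exponential decay for a fixed $G^\ell$ is obtained entirely ``horizontally'': the walk of length $k$ is shown to be nearly equidistributed modulo a square-free modulus $q=\prod_{i\le ck}p_i$, using the Bourgain--Gamburd / Salehi Golsefidy--Varj\'u expansion for square-free congruence quotients, and the $\sim ck$ primes $p_i\equiv 1\pmod\ell$, each giving a definite density saving, compound to an exponential bound. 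Your extra ``vertical'' sieve in the $\lambda$-adic tower is not part of their argument, and your claim that the horizontal sieve alone yields only sub-exponential decay, with the $p$-adic tower needed to boost it, is inconsistent with your own earlier (and correct) remark that the expander property of the square-free quotients is precisely what upgrades the classical large sieve to an exponential bound; expansion modulo prime-power moduli is a strictly stronger input that the published proof does not require.
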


\nid Since homomorphisms of groups send powers to powers, an immediate corollary of Theorems \ref{theorem1} and \ref{pes} is the following. 

\begin{corollary} \label{pesmcg} Let $\Sigma$ be as in Theorem \ref{theorem1} and let $\Gamma < \tup{Mod}(\Sigma)$ be a non virtually solvable finitely generated group. Then $\bigcup_{m=2}^\infty \Gamma^m$ is exponentially small in $\Gamma$. 

\end{corollary}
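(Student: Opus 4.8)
The plan is to deduce Corollary~\ref{pesmcg} by transporting the problem, via a homological representation furnished by Theorem~\ref{theorem1}, to a linear group where Theorem~\ref{pes} applies. First I would invoke Theorem~\ref{theorem1}: since $\Gamma$ is not virtually solvable, there is a finite cover $p \colon \Sigma' \to \Sigma$ for which $\rho_p(\Gamma)$ is not virtually solvable. The representation $\rho_p$ is finite dimensional, say $\rho_p \colon \tup{Mod}(\Sigma) \to \tup{GL}_N(\BC)$ for some $N$, and $\rho_p(\Gamma)$ is finitely generated because it is the image of the finitely generated group $\Gamma$. Thus $\rho_p(\Gamma)$ is a finitely generated, non virtually solvable subgroup of $\tup{GL}_N(\BC)$, and Theorem~\ref{pes} yields that $\bigcup_{m=2}^\infty \rho_p(\Gamma)^m$ is exponentially small in $\rho_p(\Gamma)$.

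Next I would pull this smallness back along $\rho_p$. A group homomorphism carries an $m$-th power to an $m$-th power, so $\rho_p\!\left(\bigcup_{m=2}^\infty \Gamma^m\right) \subseteq \bigcup_{m=2}^\infty \rho_p(\Gamma)^m$, whence $\bigcup_{m=2}^\infty \Gamma^m \subseteq \rho_p^{-1}\!\left(\bigcup_{m=2}^\infty \rho_p(\Gamma)^m\right)$. Fix a finite symmetric generating set $S$ of $\Gamma$ with uniform measure $\mu$; then $\rho_p(S)$ is a finite symmetric generating set of $\rho_p(\Gamma)$, and the image of the length-$k$ random walk on $(\Gamma,S)$ is the length-$k$ random walk on $\rho_p(\Gamma)$ driven by the symmetric finitely supported measure $\nu := (\rho_p)_\ast \mu$, whose support $\rho_p(S)$ generates $\rho_p(\Gamma)$. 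Hence
\[
\mu^{\star k}\!\left(\bigcup_{m=2}^\infty \Gamma^m\right) \;\le\; \mu^{\star k}\!\left(\rho_p^{-1}\!\left(\bigcup_{m=2}^\infty \rho_p(\Gamma)^m\right)\right) \;=\; \nu^{\star k}\!\left(\bigcup_{m=2}^\infty \rho_p(\Gamma)^m\right),
\]
and the right-hand side decays exponentially in $k$ by the previous paragraph, once one knows that exponential smallness in $\rho_p(\Gamma)$ is insensitive to replacing the uniform measure on a symmetric generating set by an arbitrary symmetric finitely supported measure with generating support -- a robustness that is part of the sieve framework of \cite{LMeiri1}. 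This gives $\mu^{\star k}\!\left(\bigcup_{m=2}^\infty \Gamma^m\right) < C b^{-k}$, which is the assertion of the corollary.

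The corollary is, as the excerpt notes, an immediate consequence of Theorems~\ref{theorem1} and~\ref{pes}, so the only point demanding attention is the last one: because the homological representation $\rho_p$ is in general far from injective, one must use that the Lubotzky--Meiri notion of an exponentially small set behaves well under pushing a random walk forward through a quotient map, rather than being tied to one particular generating set of $\Gamma$. I would therefore structure the write-up around a one-line citation of that robustness from \cite{LMeiri1} and keep the rest as the short composition above; I do not anticipate any genuine obstacle beyond this bookkeeping.
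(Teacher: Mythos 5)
Your argument is correct and follows the paper's own reasoning: the paper deduces the corollary as an immediate consequence of Theorems \ref{theorem1} and \ref{pes}, observing only that homomorphisms send powers to powers, so $\bigcup_m \Gamma^m$ is pulled back from $\bigcup_m \rho_p(\Gamma)^m$. The subtlety you rightly flag --- that the pushforward $(\rho_p)_*\mu$ is symmetric, finitely supported, and has generating support but need not be uniform on a generating set, so one needs the robustness of the Lubotzky--Meiri sieve to such measures --- is real and is indeed available in \cite{LMeiri1}; the paper silently absorbs this into the word ``immediate.''
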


As part of their proof of Theorem \ref{pes}, Lubotzky and Meiri use a result of Bourgain, Gumburd, and Sarnak to prove that if $G < \tup{GL}_n(\BQ)$ is a non virtually solvable finitely generated with semi-simple Zariski closure then the set of virtually unipotent elements is exponentially small in $G$. 

 Given a finite index subgroup $K < \pi_1(\Sigma, *)$, the group $\tup{Mod}_K(\Sigma)$ acts on $H_1(K, \BZ)$. Thus, the image $\rho_K(\Gamma)$ is contained in $\tup{GL}(H_1(K, \BQ))$. Let $\CG$ be the identity component of the Zariski closure of this group. The group $\CG$ may not be semisimple. Dividing by its radical, we get a map $\alpha: \CG \to \tup{GL}_m$ for some $m$. The composition $\beta: \Gamma \to \tup{GL}(H_1(K, \BQ)) \to \tup{GL}_m$ has image in $\tup{GL}_m(\BQ)$ because the radical of $\CG$ is defined over $\BQ$. The identity component of the Zariski closure of $\beta(\Gamma)$ is semisimple. Furthermore, since the map $\beta$ is given by taking a quotient of the adjoint representation, it sends virtually unipotent elements to virtually unipotent elements. 
 
 Every element of the mapping class group is either finite order, has a power that is a multi twist, or has positive topological entropy. The image of a multi twist under any homological representation is easily seen to be virtually unipotent. Thus, Lubotzky and Meiri's proof, together with our Theorem \ref{theorem1} give the following. 
 
 \begin{corollary}
 Let $\Sigma$ be as in Theorem \ref{theorem1} and let $\Gamma < \tup{Mod}(\Sigma)$ be a non virtually solvable finitely generated group. Let $X \subset \Gamma$ be the set of all elements with topological entropy $0$. Then $X$ is exponentially small in $\Gamma$. \end{corollary}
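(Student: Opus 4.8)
The plan is to run the same argument that proves Corollary \ref{pesmcg}, with the Lubotzky--Meiri theorem on virtually unipotent elements (recalled just before the statement) in place of Theorem \ref{pes}. The only feature not already present in the deduction of Corollary \ref{pesmcg} is that the virtually unipotent theorem needs a linear group with semisimple Zariski closure, so $\rho_K(\Gamma)$ cannot be fed into it directly; instead one first semisimplifies, passing to the quotient $\beta$ of $\rho_K$ by the radical of the Zariski closure, exactly as in the paragraph preceding the statement. With $\beta$ in hand, the scheme is: zero topological entropy $\Rightarrow$ $\beta$--image is virtually unipotent $\Rightarrow$ the set of such images is exponentially small in $\beta(\Gamma)$ (Lubotzky--Meiri) $\Rightarrow$ its preimage, which contains $X$, is exponentially small in $\Gamma$.

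First I would apply Theorem \ref{theorem1} to obtain a finite cover $p\colon\Sigma'\to\Sigma$ with $K=p_*\pi_1(\Sigma')$ such that, after replacing $\Gamma$ by the finite--index subgroup $\Gamma\cap\tup{Mod}_K(\Sigma)$ (still finitely generated, still not virtually solvable), the homological representation $\rho_K\colon\Gamma\to\tup{GL}(H_1(K,\BQ))$ is defined and has non virtually solvable image. Then, as in the discussion preceding the statement, let $\CG$ be the identity component of the Zariski closure of $\rho_K(\Gamma)$, let $\alpha\colon\CG\to\CG/\tup{rad}(\CG)$ be the quotient by the (rational) radical, and -- after one more finite--index reduction arranging $\rho_K(\Gamma)\subseteq\CG$ -- set $\beta=\alpha\circ\rho_K\colon\Gamma\to\tup{GL}_m(\BQ)$. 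Then $\beta(\Gamma)$ is finitely generated, not virtually solvable, and has connected semisimple Zariski closure $\CG/\tup{rad}(\CG)$; moreover $\beta$, obtained from $\rho_K$ by a quotient of algebraic groups, carries virtually unipotent elements to virtually unipotent elements.

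Next I would show that $\beta(X)$ lies in the set $U$ of virtually unipotent elements of $\beta(\Gamma)$. Let $f\in X$. By the trichotomy for mapping classes recalled above, $f$ is finite order or has a power $f^{k}\in\tup{Mod}_K(\Sigma)$ that is a multi twist; in the former case $\rho_K(f)$ has finite order, and in the latter $\rho_K(f)^{k}=\rho_K(f^{k})$ is virtually unipotent because the image of a multi twist under a homological representation is. Either way $\rho_K(f)$, hence $\beta(f)$, is virtually unipotent. Now the Lubotzky--Meiri theorem gives that $U$ is exponentially small in $\beta(\Gamma)$; since exponentially small sets pull back under surjective homomorphisms of finitely generated groups and are well behaved under passage to and from finite--index subgroups \cite{LMeiri1}, the set $\beta^{-1}(U)\supseteq X$ is exponentially small in the reduced $\Gamma$, and transferring back across the finite--index reductions of the previous step shows that $X$ is exponentially small in the original $\Gamma$.

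I expect the main obstacle to be bookkeeping rather than a single hard step. One must chain together several finite--index reductions of $\Gamma$ -- to make $\rho_K$ defined, to force powers of a zero--entropy element to be honest multi twists inside $\tup{Mod}_K(\Sigma)$, and to make the Zariski closure of the radical quotient connected -- while checking that neither these reductions nor the radical quotient itself disturbs exponential smallness, which is exactly what the stability properties of exponentially small sets in \cite{LMeiri1} provide. The only genuinely geometric input beyond the results already quoted, namely that every zero--entropy mapping class has virtually unipotent image under every homological representation, comes from the Nielsen--Thurston classification together with the elementary observation that a multi twist acts on the homology of a finite cover as a product of commuting transvections, hence unipotently.
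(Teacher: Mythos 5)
Your proposal matches the argument the paper gives in the paragraphs immediately preceding the corollary: apply Theorem~\ref{theorem1} to get a homological representation with non virtually solvable image, pass to the semisimplification $\beta$ (quotient by the radical of the Zariski closure, which preserves virtual unipotence), observe via the Nielsen--Thurston trichotomy that zero-entropy classes have virtually unipotent image, and invoke Lubotzky--Meiri's exponential smallness result for virtually unipotent elements in a finitely generated non virtually solvable linear group with semisimple Zariski closure. The extra bookkeeping you flag (finite-index reductions, stability of exponential smallness under pushforward and finite-index passage) is exactly the routine part the paper leaves implicit, and you handle it correctly.
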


\subsection{Sketch of the proof of Theorem \ref{theorem1}} Throughout this paper, we use the letter $\CB$ to denote the group of invertible upper triangular matrices, and $\CU$ to be the group of invertible upper triangular matrices with $1$'s along the diagonal (where the size of the matrices is taken to be understood by the context). The main philosophy of the argument is the following. A virtually solvable linear group has a finite index subgroup that can be conjugated into $\CB$. The commutator group $[\CB,\CB]$ is $\CU$, and hence every eigenvalue of every element of $[\CB,\CB]$ is $1$. Theorem \ref{specrad} is a tool for finding representations where given elements have eigenvalues off the unit circle, and hence it should be useful in proving theorem \ref{theorem1}. 

Let $\rho$ be the homological representation corresponding to the trivial cover $\Sigma \to \Sigma$. If $\rho(\Gamma)$ is not virtually solvable, then we are done. Otherwise, $\Gamma$ has a finite index subgroup $\Gamma'$  such that $\rho(\Gamma')$ can be conjugated into $\CB$. Let $\Gamma_0 = [\Gamma', \Gamma']$. We call a cover $p: \Sigma' \to \Sigma$ corresponding to the subgroup $K < \pi_1(\Sigma, *)$ \emph{$\Gamma_0$-admissible} if $K$ is the pullback of a finite index subgroup under the map $\pi_1(\Sigma, *) \to H_1(\Gamma_0 \rtimes \pi_1(\Sigma, *), \BZ) / \tup{(torsion)}$. The proof involves two major steps. 

\begin{enumerate}
\item In Section \ref{hriac}, we show that if Theorem \ref{theorem1} is false, then $\rho_p(\Gamma_0)$ can be conjugated into upper triangular matrices, for every $\Gamma_0$-admissible cover $p$. This is the content of Proposition \ref{prop1}. The important part here is that $\rho_p(\Gamma_0)$ itself, and not just a finite index subgroup, can be conjugated into $B$. This involves a careful study of homological representations corresponding to abelian covers, together with an argument we call the rigidity argument that shows that if $\rho_p(\Gamma_0)$ can be conjugated into $\CB$ for some admissible $p$ then it can be conjugated into $\CB$ for all admissible $p$.   

\item The group $[\Gamma_0,\Gamma_0]$ contains an element $h$ of positive topological entropy.We show that if this element if pseudo-Anosov, and Theorem \ref{theorem1} is false, then argument from our proof of Theorem \ref{specrad} can be modified to produce a tower of $\Gamma_0$-admissible covers $\Sigma_k \to \ldots \to \Sigma_0 = \Sigma$ such that $\rho_{\Sigma_k \to \Sigma}(h)$ has eigenvalues off the unit circle. This is the content of Proposition \ref{eigoffuc}. The result now follows after some reduction steps, which are covered in the final section.

\end{enumerate}

\subsection{Acknowledgements} We would like to thank Alex Lubotzky for bringing this question to our attention, and Thomas Koberda for interesting discussions about this result. 

\section{Homological representations in admissible covers} \label{hriac}

\begin{definition} Let $F_n \cong \pi_1(\Sigma, *)$, and let $\Gamma_0 < \tup{Aut}(F_n)$. A  subgroup $K \leq F_n$ is called \emph{$\Gamma_0$-admissible} if it is the pullback of a finite index subgroup by the composition $F_n \to \Gamma_0 \rtimes F_n \to H_1(\Gamma_0 \rtimes F_n, \BZ) /\tup{(torsion)}$. 
\end{definition}

\nid Our goal in this section is to prove the following. 

\begin{prop} \label{prop1} Suppose $(\Gamma_0)_* \leq \tup{GL} (H_1(F_n, \BC))$ is conjugate to a subgroup of $\CB$ and that $\rho_p(\Gamma_0)$ is virtually solvable for every $\Gamma_0$-admissible $p$.  Then $\rho_p(\Gamma_0)$ can be conjugated into $\CB$ for every $\Gamma_0$-admissible $p$. 

\end{prop}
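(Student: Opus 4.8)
The plan is to study how a single homological representation $\rho_p$ attached to a $\Gamma_0$-admissible cover $p$ decomposes, and to exploit the abelian (indeed finitely-generated nilpotent) nature of the deck group together with the hypothesis that $(\Gamma_0)_*$ is already triangularizable on $H_1(F_n,\BC)$. Recall that if $K\le F_n$ is $\Gamma_0$-admissible, corresponding to a finite-index subgroup $L$ of $A:=H_1(\Gamma_0\rtimes F_n,\BZ)/\mathrm{torsion}$, then $\Gamma_0$ acts on the cover $\Sigma'$ and on $H_1(K,\BC)$, and this space carries a commuting action of the deck group $D=F_n/K$, which is a finite abelian group, in fact a quotient of the free abelian group $A$. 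Since $D$ is finite abelian, $H_1(K,\BC)$ splits as a direct sum of $D$-isotypic (character) pieces $\bigoplus_{\chi}H_1(K,\BC)_\chi$, indexed by characters $\chi:D\to\BC^\times$. The key structural point is that because $K$ is a \emph{$\Gamma_0$-equivariant} pullback (the map $F_n\to A$ is canonical, so $\Gamma_0$ permutes the level structure), $\Gamma_0$ acts on the set of characters $\chi$ through its action on $A$, and since $\Gamma_0$ acts \emph{trivially} on $H_1(\Gamma_0\rtimes F_n,\BZ)$ modulo torsion (conjugation by an inner-type element of a semidirect product is homologically trivial — this is the whole reason for using $\Gamma_0\rtimes F_n$ rather than $F_n$), each isotypic summand $H_1(K,\BC)_\chi$ is in fact $\Gamma_0$-\emph{invariant}. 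This reduces the problem to showing each $\rho_p(\Gamma_0)|_{H_1(K,\BC)_\chi}$ is triangularizable, and these "twisted homology" pieces are governed by twisted chain complexes whose Euler characteristics and dimensions are controlled.

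**Next** I would bring in the rigidity argument advertised in the introduction. The idea is that the family of $\Gamma_0$-admissible covers is directed: any two are dominated by a common one, and passing to a larger admissible cover $q$ dominating $p$ gives, on each isotypic piece, a $\Gamma_0$-equivariant surjection (a transfer/corestriction map) $H_1(K_q,\BC)_\chi \twoheadrightarrow H_1(K_p,\BC)_\chi$ splitting off the piece coming from the intermediate cover, or dually an injection via the covering map. So the representations $\rho_p(\Gamma_0)$, as $p$ ranges over admissible covers, form an inverse/direct system, and the semisimplification of each is pinned down by that of any cofinal subsystem. Concretely: if $\rho_{p_0}(\Gamma_0)$ is triangularizable for one admissible $p_0$ — which we \emph{do} know, since $p_0 = \mathrm{id}$ is admissible and $(\Gamma_0)_* \subseteq \CB$ up to conjugacy by hypothesis — then I want to propagate triangularizability to all admissible $p$. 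The mechanism: triangularizability over $\BC$ is equivalent to the vanishing of the trace form obstruction, i.e. $\Gamma_0$ acts on each $H_1(K,\BC)_\chi$ with all commutators $[\rho_p(\gamma),\rho_p(\gamma')]$ unipotent, equivalently every element of $[\rho_p(\Gamma_0),\rho_p(\Gamma_0)]$ has all eigenvalues equal to $1$. The hypothesis that $\rho_p(\Gamma_0)$ is virtually solvable for \emph{every} admissible $p$ gives us, for each such $p$, a finite-index subgroup on which $\rho_p$ is triangularizable; the content of the proposition is to remove "finite index."

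**The main obstacle**, and the heart of the argument, is precisely this removal of the finite-index hypothesis: a priori $\rho_p(\Gamma_0)$ virtually solvable only gives a finite-index $\Gamma_1 \le \Gamma_0$ with $\rho_p(\Gamma_1)$ triangularizable, and one must rule out the possibility that $\Gamma_0$ permutes nontrivially a set of eigenlines/flags for $\Gamma_1$ — i.e. that $\rho_p(\Gamma_0)$ is "virtually triangularizable but genuinely not triangularizable," sitting inside a monomial-type extension. To kill this I would argue as follows: the obstruction to $\rho_p(\Gamma_0)$ itself being triangularizable is detected inside a \emph{finite} quotient — $\rho_p(\Gamma_0)$ has a normal finite-index triangularizable subgroup $N$, and $\rho_p(\Gamma_0)/N$ acts on the (finite) set of $N$-invariant maximal flags; triangularizability of $\rho_p(\Gamma_0)$ is equivalent to this permutation action having a global fixed point. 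Now I use admissibility: a nontrivial such permutation action, being a finite quotient of $\Gamma_0$ visible on $H_1(K,\BC)$, can be "unwound" by passing to a further admissible cover $q$ — one chosen so that the relevant finite quotient is already realized by the deck group $F_n/K_q$ — at which point the corresponding flags become $\Gamma_0$-invariant on the appropriate isotypic pieces of $H_1(K_q,\BC)$, because on those pieces $\Gamma_0$ now acts through a group that, by the virtual-solvability hypothesis applied to $q$, must fix the flag. Feeding this back down the covering map $q \to p$ (using the $\Gamma_0$-equivariant map on twisted homology) forces $\rho_p(\Gamma_0)$ to have fixed the flag all along — contradiction unless $\rho_p(\Gamma_0)$ was triangularizable to begin with. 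Making the bookkeeping of characters, isotypic components, and the compatibility of transfer maps with the $\Gamma_0$-action fully precise is where the real work lies; everything else is standard structure theory of solvable linear groups (Lie–Kolchin) and of finite abelian covers.
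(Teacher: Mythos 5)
Your setup --- the character decomposition of $H_1(K,\BC)$ into isotypic pieces for the deck group, and the observation that admissibility forces $\Gamma_0$ to act trivially on the deck group (conjugation being homologically trivial in $\Gamma_0\rtimes F_n$) so that each isotypic piece is $\Gamma_0$-invariant --- matches the paper's Observation~\ref{blockmatrix} and the opening of the proof. The gap is in your third paragraph, where the core mechanism does not close the loop. You identify the obstruction to triangularizing all of $\rho_p(\Gamma_0)$ as a finite permutation action of $\Gamma_0$ on $N$-invariant flags, and propose to kill it by passing to a further admissible cover $q$ chosen ``so that the relevant finite quotient is already realized by the deck group $F_n/K_q$.'' But the deck group of an admissible cover is an abelian quotient of $F_n$ on which $\Gamma_0$ acts trivially; it is not, and cannot be, a quotient of $\Gamma_0$, whereas the finite permutation action you want to trivialize is a quotient of $\Gamma_0$, not of $F_n$. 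More fundamentally, even after passing to $q$, the virtual-solvability hypothesis applied to $q$ again hands you only a finite-index subgroup of $\rho_q(\Gamma_0)$ fixing a flag --- you have reproduced exactly the difficulty you were trying to remove, one cover higher. There is no induction base, and the argument as stated is circular.

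The paper's resolution is of an entirely different nature: it is a deformation argument on the character torus, not a tower-of-covers argument. Lemmas~\ref{boundpowers}--\ref{deformation2} treat the Magnus matrices $M_\gamma$ as matrices over the Laurent ring $\BC[X_1^{\pm1},\dots,X_n^{\pm1}]$ and study the specializations $G(\xi)=\langle M_\gamma(\xi)\rangle$ as $\xi$ ranges over the sub-torus $\BS$ of $\Gamma_0$-invariant characters. The key points are: (i) the set $\BX\subseteq\BS$ of $\xi$ where $G(\xi)$ is genuinely (not merely virtually) triangularizable is closed in the Hausdorff topology, by a polar-decomposition and compactness-of-$U_n(\BC)$ argument; (ii) failure of genuine triangularizability at $\xi$ forces some generator $a_j(\xi)$ to have two distinct eigenvalues whose ratio is a nontrivial root of unity of degree at most $M=M(n)$; (iii) the locus where this happens is constructible, so by irreducibility of $\BS$, if $\BX\neq\BS$ then such eigenvalue ratios would occur arbitrarily near the base point $\zeta$ where $G(\zeta)\subset\CU$ and all ratios equal $1$ --- impossible, since $1$ is isolated from the nontrivial roots of unity of degree at most $M$. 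This continuity-of-eigenvalues argument is precisely the missing ingredient for passing from ``virtually triangularizable at every specialization, unipotent at one specialization'' to ``triangularizable at every specialization,'' and nothing in your proposal supplies a substitute. (The paper also needs Lemma~\ref{nschains} to transfer the virtual-solvability hypothesis from $H_1$ to the chain space $\CC_p$ before the block decomposition can be applied, a step your write-up passes over.)
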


\subsection{The chain space of an abelian covers}
Let $R = \bigvee_1^n S^1$ be the graph given by a wedge of $n$ circles, and let $v \in R$ be its vertex.  We have that $\pi_1(R, v) \cong F_n$. Pick a preferred orientation on each edge of $R$. Given a regular cover $p: \wt{R} \to R$, denote by $\CC = \CC_p = C_1(\wt{R}, \BC)$ the space of simplicial $1$-chains in $\wt{R}$ with coefficients in $\BC$. 

Let $G = G_p$ be the deck group of the covering space $p$. The action of $G$ on $\wt{R}$ by deck transformations gives $\CC$ the structure of a $G$-module. This structure has a simple description. 

Pick a lift $\wt{v}$ of $v$ to $\wt{R}$. Every other lift of $v$ is an image of $\wt{v}$ under a unique deck transformation. This identifies the vertex set $V(\wt{R})$ with the deck group $G$. Given a positively oriented edge $e$ of $R$, every lift $\wt{e}$ of $e$ is the image under a deck transformation of an edge originating at $\wt{v}$. This identifies the lifts of $e$ with $G$. Write $E(R) = \{e_1, \ldots, e_n\}$. Elements of $\CC$ are formal combinations of oriented edges in $\wt{R}$. We can write such an element as the sum of a formal combination of lifts of $e_1$, a formal combination of lifts of $e_2$, etc. In this way, we have that as $G$-modules: $\CC \cong (\BC[G])^n  $.

Suppose that the group $G$ is a finite abelian group. Let $\xi: G \to \BC^*$ be a character of $G$. Let $$\CC[\xi] = \{x \in \CC: gx = \xi(g) x, \forall g \in G \}$$
The space $\CC[\xi]$ is an $n$-dimensional subspace of $\CC$. Indeed, let $\wt{e}_1, \ldots, \wt{e}_n$ be the lifts of $e_1, \ldots, e_n$ originating at $\wt{v}$. Since every edge in $\wt{R}$ is a $G$-image of one of these edges, any $x \in \CC[\xi]$ is completely determined by its coefficients for the edges $\wt{e}_1, \ldots, \wt{e}_n$. 

Since $G$ is a finite abelian group, its group ring decomposes as a direct sum of all of its characters. Thus, we get: 

$$\CC = \bigoplus_{\xi} \CC[\xi] $$

\subsection{The Magnus representation and specializations of the Magnus matrix} Let $p: (\wt{R}, \wt{v}) \to (R,v)$ be an abelian cover as above corresponding to the subgroup $K < F_n$. Suppose that $\Gamma_0(K) = K$. Then we have a well defined map $\Gamma_0 \to \tup{GL}(\CC_p) \cong \tup{GL}(\BC[G]^n)$. Let $\wt{e}_1, \ldots, \wt{e_n}$ be the preferred lifts of $e_1, \ldots, e_n$ as above. Given $\gamma \in \Gamma_0$, let $M_\gamma \in GL(\BC[G]^n)$ be the matrix whose $i^{th}$ column is the element of $\BC[G]^n$ corresponding to $\gamma \cdot (\wt{e}_i)$. 

Since $K$ is $\Gamma_0$-invariant, the group $\Gamma_0$ acts on the deck group $G \cong F_n/K$. This induces a map $\Gamma_0 \to \tup{End}(\BC[G]^n)$. Denote the image of $\gamma \in \Gamma_0$ in $\tup{End}(\BC[G]^n)$ by $\sigma(\gamma)$. 

Let $\wt{e}_{i,g}$ be the lift of $e_i$ corresponding to the element $g \in G$. We identity $\wt{e}_{i,g}$ with the $1$-chain $1 \cdot \wt{e}_{i,g}$. Then $\gamma(\wt{e}_{i,g}) = \sigma(\gamma) M_\gamma \wt{e}_{i,g} $.  The assignment $\gamma \to M_\gamma$ is not a homomorphism, but it is a crossed homomorphism. When $\Gamma_0$ induces the trivial map on $G$, then the assignment becomes a homomorphism. 

If $p: \wt{R} \to R$ is the universal abelian cover (that is - the cover corresponding to the commutator subgroup $[F_n, F_n] <F_n$ then the matrix $M_\gamma$ is called the \emph{Magnus matrix associated to $\gamma$}. If we consider $H_1(F_n, \BZ) \cong \BZ^n$ as a multiplicative group generated by $X_1, \ldots, X_n$ then we have $M_\gamma \in M_n(\BC[X_1^{\pm 1}, \ldots, X_m^{\pm 1}])$. The entries of this matrix are rational functions in the variables $X_1, \ldots, X_n$. 

Given an $n$-tuple $\overline{\xi} = (\xi_1, \ldots, \xi_n) \in  \BC^n$, we can define the \emph{specialization of $M_\gamma$ at $\overline{\xi}$ or $M_\gamma(\overline{\xi})$} to be the element of $M_n(\BC)$ obtained by plugging in $\xi_i$ for $X_i$ in every entry of $M_\gamma$. The tuple $\overline{\xi}$ also gives a character $ \cong H_1(F_n, \BZ) \to \BC^*$ obtained by sending $X_i$ to $\xi_i$. 

Given a finite abelian cover $q: R' \to R$, with deck group $G$, any character $\xi$ of $G$ determines a character of $H_1(F_n, \BZ)$. Suppose that the map $\gamma$ lifts to the cover $R'$. Let $V = H_1(R', \BZ)^*$ be the unitary dual of $H_1(R', \BZ)$. The automorphism $\gamma$ induces a map $\gamma^*: V \to V$. For any  character $\xi \in V$, $\gamma$ sends the space $\CC[\gamma^*\xi]$ to $\CC[\xi]$ via the map $M_\gamma(\gamma^* \xi)$. This gives us the following description of the map $\gamma_*: \CC_q \to \CC_q$. 

\begin{observation} \label{blockmatrix}

Using the decomposition $\CC_q = \bigoplus_{\xi} \CC_q[\xi]$, we can pick a basis in which  the map $\gamma_*: \CC_q \to \CC_q$ can be written block matrix form, where the $n \times n $ block in the $(\gamma^* \xi, \xi)$ spot is $M_\gamma(\gamma^* \xi)$ and all other blocks are zero blocks. 
 
 \end{observation}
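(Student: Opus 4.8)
The plan is to build an explicit basis of $\CC_q$ adapted to the decomposition $\CC_q=\bigoplus_\xi\CC_q[\xi]$ and then to read off the matrix of $\gamma_*$ in that basis by a direct computation; everything will reduce to moving deck-group coefficients past the isotypic idempotents of the commutative ring $\BC[G]$.

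First I would fix, for each character $\xi$ of $G$, the element $p_\xi:=\sum_{g\in G}\overline{\xi(g)}\,g\in\BC[G]$, which up to the scalar $|G|$ is the idempotent projecting $\BC[G]$ onto its $\xi$-isotypic line. It satisfies $h\cdot p_\xi=\xi(h)\,p_\xi$ for $h\in G$, and hence $a\cdot p_\xi=\mathrm{ev}_\xi(a)\,p_\xi$ for every $a\in\BC[G]$, where $\mathrm{ev}_\xi\colon\BC[G]\to\BC$ is the ring homomorphism extending $\xi$. Setting $b^\xi_i:=p_\xi\cdot\wt e_i\in\CC_q$ for $1\le i\le n$, a one-line computation with the deck action on $\{\wt e_{j,g}\}$ shows $g\cdot b^\xi_i=\xi(g)\,b^\xi_i$, so $b^\xi_i\in\CC_q[\xi]$; since the $b^\xi_i$ have pairwise disjoint supports in the edge basis and there are $n$ of them, $\{b^\xi_1,\dots,b^\xi_n\}$ is a basis of the $n$-dimensional space $\CC_q[\xi]$. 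Thus $\bigcup_\xi\{b^\xi_1,\dots,b^\xi_n\}$ is a basis of $\CC_q$ compatible with the direct sum, in which the $(\eta,\xi)$ block of an operator is precisely its $\CC_q[\xi]\to\CC_q[\eta]$ component written in these bases.

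Next I would pin down $\gamma_*$. Let $\tilde\gamma$ be the lift of $\gamma$ fixing the chosen base vertex of $R'$; conjugation by $\tilde\gamma$ carries the deck transformation $g$ to $\gamma_\sharp(g)$, where $\gamma_\sharp$ is the automorphism of $G\cong F_n/K$ induced by $\gamma$, so $\gamma_*(g\cdot x)=\gamma_\sharp(g)\cdot\gamma_*(x)$ for all $x\in\CC_q$. Applying this to $b^\xi_i$ shows immediately that $\gamma_*$ maps $\CC_q[\xi]$ into the single summand $\CC_q[\xi\circ\gamma_\sharp^{-1}]$, so that in the $b^\bullet$-bases the matrix of $\gamma_*$ is block-monomial — one nonzero block per block-row and block-column — and that block realizes the map $\CC_q[\gamma^*\xi]\to\CC_q[\xi]$ of the statement after relabelling the summands. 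To compute the surviving block I would use the identity $\gamma_*(\wt e_{i,g})=\gamma_\sharp(g)\cdot\sum_j (M^q_\gamma)_{ji}\,\wt e_j$, where $M^q_\gamma\in M_n(\BC[G])$ is the matrix whose $i$-th column records $\gamma_*(\wt e_i)$ in $\CC_q$. Expanding $\gamma_*(b^\xi_i)=\sum_g\overline{\xi(g)}\,\gamma_*(\wt e_{i,g})$, collecting the $g$-sum into $p_{\xi\circ\gamma_\sharp^{-1}}$ via the substitution $g\mapsto\gamma_\sharp(g)$, and then applying $a\cdot p_\eta=\mathrm{ev}_\eta(a)\,p_\eta$ coordinatewise, one obtains $\gamma_*(b^\xi_i)=\sum_j \mathrm{ev}_{\xi\circ\gamma_\sharp^{-1}}\big((M^q_\gamma)_{ji}\big)\,b^{\xi\circ\gamma_\sharp^{-1}}_j$, so the block is the matrix gotten by applying $\mathrm{ev}$ at the relevant character to the entries of $M^q_\gamma$.

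It remains to recognize this matrix as a specialization of the genuine Magnus matrix. The universal abelian cover of $R$ factors through $R'$ compatibly with the lifts of $\gamma$ and sends preferred lifts to preferred lifts, so $M^q_\gamma$ is the image of $M_\gamma$ under the coefficient ring map $\BC[H_1(F_n,\BZ)]\to\BC[G]$; hence applying $\mathrm{ev}$ at a character $\eta$ to the entries of $M^q_\gamma$ is exactly evaluating the Laurent-polynomial entries of $M_\gamma$ at the point of the torus determined by $\eta$, i.e.\ forming $M_\gamma(\eta)$. Tracking the labels through the relabelling above, this is $M_\gamma(\gamma^*\xi)$ in the $(\gamma^*\xi,\xi)$ slot, as claimed. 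I do not expect a genuine obstacle here: the argument is entirely bookkeeping, and the only places that need care are the crossed-homomorphism twist by $\gamma_\sharp$, keeping the source and target characters straight (they differ by $\gamma^*$), and the elementary but essential point that passing from the universal abelian cover down to $R'$ is precisely what converts the Magnus matrix into its specialization.
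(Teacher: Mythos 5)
Your proposal is correct and takes essentially the same route the paper implicitly relies on (the paper states the Observation as a direct consequence of the preceding two paragraphs and gives no separate proof); your construction of the isotypic basis $b^\xi_i=p_\xi\cdot\wt e_i$, the crossed-homomorphism identity $\gamma_*(g\cdot x)=\gamma_\sharp(g)\cdot\gamma_*(x)$, and the reduction of $M_\gamma^q$ to a specialization of the Magnus matrix via compatibility of preferred lifts are exactly the missing bookkeeping. The only soft spot is the very last relabelling: your own computation shows the block is $M_\gamma$ evaluated at the \emph{target} character $\xi\circ\gamma_\sharp^{-1}$, so for the slot $(\gamma^*\xi,\xi)$ to carry $M_\gamma(\gamma^*\xi)$ one must take $\gamma^*\xi:=\xi\circ\gamma_\sharp^{-1}$ (rows indexed by target), whereas earlier you tacitly use $\gamma^*\xi=\xi\circ\gamma_\sharp$ when matching ``$\CC_q[\gamma^*\xi]\to\CC_q[\xi]$''; this is a convention the paper's prose also leaves ambiguous, and it has no effect on the rest of the argument since Lemma~\ref{uppertriangularmagnus} and Proposition~\ref{prop1} only invoke the Observation at $\Gamma$-invariant characters, where $\gamma^*\xi=\xi$.
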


\nid For further information about the Magnus representations and its specializations, see \cite{eigoffuc}, \cite{mag}, \cite{mag2}. 

\subsection{The rigidity argument} Let $n$ be an integer and let \\ $R = \BC[X_1^{\pm1}, \ldots, X_n^{\pm 1}]$. Let $\CB < \tup{GL}_n(\BC)$ be the group of invertible upper triangular matrices and $\CU < \tup{GL}_n(\BC)$ be the group of all upper triangular matrices with $1$'s along the diagonal. Let $\BT = \{(\xi_1, \ldots, \xi_n) \in \BC^n : |\xi_i| = 1, \forall i  \}$

\begin{lemma} \label{boundpowers}
There exists a function $f: \BN \to \BN$ such that every finitely generated $G < \tup{GL}_n(\BC)$ which contains a normal abelian subgroup of finite index consisting of diagonalizable matrices contains such a subgroup of index at most $f(n)$.
\end{lemma}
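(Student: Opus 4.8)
This is a Jordan-type statement, and the plan is to reduce it to Jordan's theorem (applied in projective linear groups) together with a Stone--von Neumann estimate.

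I would begin by simultaneously diagonalising $A$, the given normal abelian finite-index subgroup consisting of diagonalisable matrices, so that $A$ lies in the diagonal torus. Its Zariski closure $\overline{A}$ is then a closed subgroup of the diagonal torus, hence a diagonalisable algebraic group, and $G$ normalises $\overline{A}$ because $A\triangleleft G$. Decompose $\BC^{n}=\bigoplus_{j=1}^{k}V_{j}$ into the weight spaces of $\overline{A}$, so $k\le n$. Then the centraliser $C:=C_{\tup{GL}_{n}(\BC)}(\overline{A})$ is the block-diagonal Levi subgroup $\prod_{j}\tup{GL}(V_{j})$; the normaliser $N:=N_{\tup{GL}_{n}(\BC)}(\overline{A})$ permutes the $V_{j}$, and an element of $N$ fixing every $V_{j}$ centralises $\overline{A}$, so $N/C$ embeds in the symmetric group on $\{V_{1},\dots,V_{k}\}$ and $[N:C]\le k!\le n!$. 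Since $G\le N$, the subgroup $G_{1}:=G\cap C$ has index at most $n!$ in $G$, contains $A$, and lies in $\prod_{j}\tup{GL}(V_{j})$; moreover each projection $\pi_{j}\colon C\to\tup{GL}(V_{j})$ carries $\overline{A}$, and hence $A$, into the scalar matrices of $\tup{GL}(V_{j})$, since $V_{j}$ is a single weight space.

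Next I would analyse each factor $G_{j}:=\pi_{j}(G_{1})\le\tup{GL}(V_{j})$ separately. Because $\pi_{j}(A)$ is a finite-index central subgroup of $G_{j}$ consisting of scalars, the image of $G_{j}$ in $\tup{PGL}(V_{j})$ is finite, so by Jordan's theorem it contains an abelian normal subgroup of index bounded in terms of $\dim V_{j}$; let $B_{j}\le G_{j}$ be its preimage. Then $[B_{j},B_{j}]$ is contained in the scalar matrices of determinant $1$, hence in a group of at most $\dim V_{j}$ elements, and in particular $B_{j}$ is nilpotent of class at most $2$. Furthermore the identity component of $\overline{G_{j}}=\pi_{j}(\overline{G_{1}})$ is $\pi_{j}(\overline{G_{1}}^{\circ})=\pi_{j}(\overline{A}^{\circ})$ (using that $A$ has finite index in $G$), which is contained in the scalars; a group whose identity component is a torus has no nontrivial unipotents, so every element of $\overline{G_{j}}$, hence of $B_{j}$, is semisimple. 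Since $\pi_{j}(A)$ is central of finite index in $B_{j}$, the quotient $B_{j}/Z(B_{j})$ is finite, and the commutator map makes it a finite abelian group equipped with a non-degenerate alternating pairing into $[B_{j},B_{j}]$.

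The key quantitative point is to bound $|B_{j}/Z(B_{j})|$ by a function of $\dim V_{j}$. For this I would decompose $V_{j}$ into $B_{j}$-irreducibles $U_{1},\dots,U_{r}$ (so $r\le\dim V_{j}$); on each $U_{l}$ the scalar subgroup $[B_{j},B_{j}]$ acts faithfully, so the kernel $K_{l}$ of $B_{j}\to\tup{GL}(U_{l})$ meets $[B_{j},B_{j}]$ trivially, hence $[B_{j},K_{l}]=1$ and $K_{l}\le Z(B_{j})$, while $\bigcap_{l}K_{l}=1$. Thus $B_{j}/Z(B_{j})$ embeds in $\prod_{l}(B_{j}/K_{l})/Z(B_{j}/K_{l})$, and each factor is the central quotient of an irreducible linear group with scalar commutator subgroup; a Stone--von Neumann (Clifford-theory) argument bounds such a central quotient by $(\dim U_{l})^{2}$, so $|B_{j}/Z(B_{j})|\le(\dim V_{j})^{2\dim V_{j}}$. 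Taking $\widehat B_{j}\le B_{j}$ to be the preimage of a Lagrangian subgroup of $B_{j}/Z(B_{j})$ for this pairing yields an abelian subgroup of index $\sqrt{|B_{j}/Z(B_{j})|}$ in $B_{j}$ which, being generated by commuting semisimple matrices, consists of diagonalisable matrices.

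Finally I would reassemble: $\widehat B:=G_{1}\cap\prod_{j}\widehat B_{j}$ is abelian, consists of diagonalisable matrices (diagonalise each block), and has index at most $\prod_{j}[G_{j}:\widehat B_{j}]$ in $G_{1}$, so $[G:\widehat B]$ is bounded by a function of $n$. Replacing $\widehat B$ by its normal core $\bigcap_{g\in G}g\widehat B g^{-1}$ keeps it abelian and diagonalisable and raises the index only to (at most) the factorial of the previous bound; this produces the required $f(n)$. I expect the main obstacle to be the Stone--von Neumann step, namely the statement that an irreducible subgroup of $\tup{GL}_{r}(\BC)$ with scalar commutator subgroup has central quotient of order at most $r^{2}$; the rest is either elementary or a direct appeal to Jordan's theorem.
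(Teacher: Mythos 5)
Your proof is correct, but it takes a genuinely different and in fact more thorough route than the paper's. You work throughout with the Zariski closure $\overline{A}$, pass to the bounded-index subgroup $G_1=G\cap C(\overline{A})$ via the permutation action on the weight spaces of $\overline{A}$, then on each block apply Jordan's theorem in $\tup{PGL}(V_j)$ and finish with a commutator-pairing (Stone--von Neumann/Clifford) argument to extract an honest abelian subgroup of bounded index from the resulting class-$2$ nilpotent preimage $B_j$. The paper instead decomposes $\BC^n$ into $A$-isotypic pieces, passes to the bounded-index subgroup that stabilizes each piece, and on a block where $A$ is scalar normalizes the determinants of the generators so that the block image lands in $\tup{SL}$ and becomes a finite group, to which Jordan--Schur is applied directly. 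The salient difference is what each argument actually delivers. The paper's reduction (replacing $X_i$ by $\delta_iX_i$) only transports information about commutators and diagonalizability of $M$-th powers back and forth; it cleanly establishes that there is an $M=M(n)$ with $[P^M,Q^M]=I$ and $P^M$ diagonalizable for all $P,Q\in G$, which is precisely the form in which the lemma is invoked later (in Lemma~\ref{deformation} and Lemma~\ref{deformation2}). But by itself this does not bound the index $[G:\langle G^M\rangle]$, since $G/\langle G^M\rangle$ is a finitely generated virtually-abelian group of exponent $M$ whose order depends on the number of generators of $G$, not just on $n$. The Jordan step in the paper produces an abelian subgroup of $G'$, and transporting it back to $G$ via $\tup{PGL}$ yields only a subgroup with scalar commutators, not an abelian one --- exactly the gap your Stone--von Neumann step closes by bounding $|B_j/Z(B_j)|$ and passing to a maximal isotropic subgroup. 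So your argument is heavier (Zariski closures, semisimplicity of $\overline{G_j}$, the central-quotient bound for irreducibles with scalar commutator), but it actually proves the index bound as stated, where the paper's written argument only gives the weaker exponent form. One small remark: your bound $|B_j/Z(B_j)|\le(\dim V_j)^{2\dim V_j}$ is more than you need --- any bound depending only on $n$ suffices, and the normal-core step at the end already costs a factorial.
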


\begin{proof}
	Let $A \lhd G$ be a normal diagonalizable abelian subgroup of finite index that is maximal under inclusion. Suppose first that all the elements of $A$ are multiples of the identity. 
	
	Pick a generating set $G = \langle X_1, \ldots, X_m \rangle$. Pick $\delta_i$ such that $X_i' = \delta_i X_i$ satisfies $\tup{det}(X_i') =1$. Let $G' = \langle X_1', \ldots, X_m' \rangle$. If we find a number $M$ such that $[P^M, Q^M] = I_n$ for any $P,Q \in G'$ and that $P^M$ is diagonalizable for all $P \in G'$ Then the same will hold for $G$. Thus, it is enough to assume that $G < \tup{SL}_n(\BC)$. In this case, every element of $A$ must have finite order (since they are multiples of the identity matrix), and thus $G$ is finite. The result now follows from the Jordan-Schur theorem, which states that for every $n$ there is a number $g(n)$ such that any finite subgroup of $\tup{GL}_n(\BC)$ has a normal abelian subgroup of index at most $g(n)$.
	
	We now turn to the case where $A$ contains elements that are not multiples of the identity. Decompose $\BC^n$ as a direct sum of characters of $A$: $\BC^n \cong \bigoplus_i m_i \chi_i$, where $m_i$ is the multiplicity of the character $\chi_i$. 
	
	As $A \lhd G$, any element $X$ of $G$ acts by permutations on the set $\{m_i \chi_i\}_i$. Thus, there is a number $M$, dependent only on $n$ such that $X^M$ fixes each such representation. For each $i$, the restriction $A|_{m_i \chi_i}$ gives a group of matrices that are multiples of the identity map on the space $m_i \chi_i$. The first part of the proof now gives the result.

\end{proof}

\begin{lemma} \label{deformation} Let $\BS \subset \BT$ be an algebraic sub-torus. Let $a_1, \ldots a_m \in \tup{GL}_n(R)$. Suppose that $\langle a_1(\xi), \ldots,  a_m(\xi) \rangle$ is virtually solvable for every $\xi \in \BS$ and that there exists $\zeta \in \BS$ such that $\langle a_1(\zeta), \ldots, a_m(\zeta) \rangle$ is conjugate to a subgroup of $\CU$. Then $\langle a_1(\xi), \ldots,  a_m(\xi) \rangle$ is conjugate to a subgroup of $\CB$ for every $\xi \in \BS$. 

\end{lemma}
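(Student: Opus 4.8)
The plan is to combine two ingredients: the uniform bound on indices from Lemma \ref{boundpowers}, and a Zariski-density / specialization argument on the torus $\BS$. Here is the idea in more detail. Since each $a_i(\xi)$ is a matrix whose entries are regular functions of $\xi \in \BS$, the condition ``$\langle a_1(\xi), \dots, a_m(\xi)\rangle$ has a normal diagonalizable abelian subgroup of index at most $N$'' can be phrased in terms of the vanishing of finitely many regular functions on $\BS$ (for each choice of which words $w_1, \dots, w_k$ in the generators form the subgroup, one writes down the equations saying the $w_j$ commute, are simultaneously diagonalizable, and that the finitely many cosets exhaust the group modulo $N$-th powers). By Lemma \ref{boundpowers}, the constant $N = f(n)$ works uniformly: at every $\xi \in \BS$ the group $\langle a_1(\xi),\dots, a_m(\xi)\rangle$, being virtually solvable and hence (after passing to a finite-index subgroup, by Lie-Kolchin) virtually triangularizable, in fact has a normal diagonalizable abelian subgroup of bounded index — actually one should be a little careful here: virtually solvable linear groups are virtually triangularizable, and a triangularizable group has its diagonal part abelian, so after a further bounded-index passage one gets a normal abelian subgroup all of whose elements are diagonalizable only if that triangular part is semisimple, which need not hold. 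So the correct uniform statement to extract is: every $\langle a_i(\xi)\rangle$ has a triangularizable subgroup of index $\le f'(n)$ for a uniform $f'(n)$, this being the linear Lie–Kolchin/Tits content together with Jordan–Schur.

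With that uniform index in hand, the main step is a rigidity/continuity argument along $\BS$. First I would stratify $\BS$ by the (finitely many) possible combinatorial types of the index-$\le f'(n)$ triangularizable subgroup and its flag, so that on a Zariski-open dense subset $\BS_0 \subseteq \BS$ the triangularizing flag varies algebraically; on $\BS_0$ one then has a morphism $\xi \mapsto F(\xi)$ to the appropriate (partial) flag variety such that $a_i(\xi)^{M}$ preserves $F(\xi)$ for all $i$ and a fixed $M$ depending only on $n$. The hypothesis that at the special point $\zeta$ the whole group $\langle a_i(\zeta)\rangle$ (not just a finite-index subgroup) lies in $\CU$ — in particular is unipotent, so every $a_i(\zeta)$ has all eigenvalues $1$ — is what lets us upgrade from ``finite-index subgroup is triangularizable'' to ``the whole group is triangularizable'' near $\zeta$, and then propagate. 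Concretely: the eigenvalues of $a_i(\xi)$ are (multivalued) algebraic functions of $\xi$; the condition that $a_i(\xi)$ is triangularizable over $\BC$ is automatic, but the condition that $\langle a_i(\xi)\rangle$ is \emph{simultaneously} triangularizable is the vanishing of regular functions (the commutator subgroup must be unipotent — equivalently $\trace$ of every word in the commutators of the $a_i$ equals $n$, a countable family of polynomial identities in the entries). Each such identity either holds on all of $\BS$ (since $\BS$ is irreducible) or on a proper closed subset. Since all these identities hold at $\zeta$ (because $\langle a_i(\zeta)\rangle \subseteq \CU$ forces $[\langle a_i(\zeta)\rangle, \langle a_i(\zeta)\rangle] \subseteq \CU$, hence unipotent), and $\zeta$ is a point of the irreducible variety $\BS$ — here one must know $\zeta$ is not forced into the bad locus; this is where virtual solvability everywhere is used to rule out the bad locus being all of $\BS$ — they hold identically on $\BS$. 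Therefore $[\langle a_i(\xi)\rangle, \langle a_i(\xi)\rangle]$ is unipotent for every $\xi$, which together with virtual solvability (Lie–Kolchin applied to the solvable finite-index subgroup, whose commutator is then in the unipotent radical) gives that $\langle a_i(\xi)\rangle$ itself is triangularizable, i.e. conjugate into $\CB$, for every $\xi \in \BS$.

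The step I expect to be the genuine obstacle is the passage from ``$\langle a_i(\zeta)\rangle \subseteq \CU$'' together with ``virtually solvable everywhere'' to ``$[\langle a_i(\xi)\rangle,\langle a_i(\xi)\rangle]$ unipotent for all $\xi$''. The subtlety is that ``unipotent'' is a Zariski-closed condition on a single matrix, but the relevant locus in $\BS$ is cut out by \emph{infinitely} many trace identities (one per word), and I must argue that their common zero set, which contains $\zeta$, is all of $\BS$ — equivalently, that the set of $\xi$ where the commutator subgroup is unipotent is Zariski-closed and nonempty-with-nonempty-interior, or better, that it is actually closed and its complement, where by virtual solvability the commutator subgroup is virtually-but-not-actually unipotent, has a controllable structure. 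The clean way to handle this is: by virtual solvability and the uniform index bound, on a dense open $\BS_0$ there is an algebraic family of flags fixed by the finite-index subgroup $H(\xi)$; the ``correction'' measuring failure of the \emph{full} group to fix a flag is governed by the finite group $\langle a_i(\xi)\rangle / H(\xi)$ of uniformly bounded order acting on a bounded set of candidate flags, so the full-group triangularizability locus is constructible and contains $\zeta$; then one checks it is closed by a properness argument on the flag variety, and uses irreducibility of $\BS$ plus the hypothesis at $\zeta$ to conclude it is everything. Making the constructible-to-closed step airtight — ruling out that the flag degenerates or escapes as $\xi$ moves away from $\zeta$ — is the crux, and I would isolate it as a separate claim about algebraic families of finite subgroups of $\GL_n$ acting on flag varieties.
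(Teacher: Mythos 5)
Your high-level plan — bound indices uniformly via Lemma \ref{boundpowers}, then exploit irreducibility of $\BS$ together with the special point $\zeta$ — matches the paper's architecture, but the central step of your argument has a genuine gap. You claim that the trace identities $\trace(w) = n$ (for $w$ ranging over words in the commutators of the $a_i$) each either hold on all of $\BS$ or on a proper closed subset, and that since they hold at $\zeta$ they must hold identically by irreducibility. That inference is false: a regular function vanishing at one point of an irreducible variety need not vanish everywhere. You flag this yourself (``one must know $\zeta$ is not forced into the bad locus''), but the fix you gesture toward — invoking virtual solvability at every $\xi$ plus a properness claim about algebraic families of flags — cannot supply what is needed, because virtual solvability holds on both the good and the putative bad locus and so cannot by itself distinguish them, and there is no a priori algebraic family of flags defined near a bad $\xi$ to which such a properness claim could apply.

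The ingredient you are missing, which is the real content of the lemma, is a quantitative detection of the obstruction to full-group (as opposed to finite-index) triangularizability. Via Lemma \ref{boundpowers}, the paper shows: if $G(\xi)$ is virtually solvable but not conjugate into $\CB$, then some generator $a_j(\xi)$ has two distinct eigenvalues $\lambda \neq \mu$ with $\lambda^k = \mu^k$ for some $1 < k \leq M$, with $M$ depending only on $n$. This follows from the elementary observation that a diagonalizing basis for $X^k$ can fail to diagonalize $X$ only when $X$ has an eigenvalue ratio that is a nontrivial $k$-th root of unity, applied inductively along the flag of common eigenspaces of a triangularized finite-index normal subgroup $G_0(\xi)$. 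The set $N_j$ of such $\xi$ is constructible (a rank inequality involving $\gcd(p_k, p_k')$, where $p_k$ is the characteristic polynomial of $a_j^k(\xi)$, realizes it as a finite union of locally closed sets). One then shows the good locus $\BX = \{ \xi : G(\xi) \text{ is conjugate into } \CB \}$ is closed in the Hausdorff topology, using polar decomposition and compactness of the unitary group; hence $\BX^c$ is open, and if nonempty it lies in $N_1 \cup \cdots \cup N_m$, so irreducibility of $\BS$ forces some $N_j$ to be dense. Finally, the unordered tuple of eigenvalue ratios of $a_j(\xi)$ varies continuously in $\xi$; at $\zeta$ all ratios equal $1$ (since $\langle a_i(\zeta)\rangle \subseteq \CU$), while on the dense set $N_j$ one ratio is a nontrivial root of unity of degree at most $M$, and such ratios are uniformly bounded away from $1$. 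That contradiction is what closes the argument. The bounded degree $M$ is indispensable — without it roots of unity accumulate at $1$ and the continuity argument evaporates — and it is exactly the quantitative output of Lemma \ref{boundpowers}, which your proposal cites but never uses in a load-bearing way.
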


\begin{proof}
Let $\xi \in \BS$, and let $G = G(\xi) = \langle a_1(\xi), \ldots,  a_m(\xi) \rangle$. The group $G$ has a finite index subgroup $G_0 \lhd G$ that is conjugate to a subgroup of $\CB$. Indeed, let $\CG$ be the Zariski closure of $G$. The group $\CG$ is virtually solvable. Let $\CG_0$ be the connected component of the identity. As a connected solvable algebraic group, $\CG_0$ is conjugate to a subgroup of upper triangular matrices. Take $G_0 = \CG_0 \cap G$. We wish to prove that we can take $[G: G_0] = 1$.

Let $V$ be the subspace of all vectors $v \in \BC^n$ such that $v$ is an eigenvector for every element of $G_0$. The space $V$ is $G$-invariant, and the restriction map gives a homomorphism $r: G \to \tup{GL}(V)$ such that the image of $r(G_0)$ is a finite index, normal abelian subgroup of $r(G)$ consisting of diagonal matrices. By Lemma \ref{boundpowers}, up to a change of basis for $V$, there is a number $M$ which depends only on $n$ such that $r(g^M)$ is diagonal for every $g \in \Gamma$. 

Given a diagonalizable matrix $X$ and an integer $k$, a diagonalizing basis for $X^k$ can fail to be a diagonalizing basis for $X$ only if $X$ has two different eigenvalues $\lambda \neq \mu$ such that $\lambda^k = \mu^k$. Thus, if $r(G)$ is not  diagonal then at least one of the $a_i(\xi)$ has the property that it has eigenvalues $\lambda \neq \mu$ such that $\lambda^k = \mu^k$ for some $1 < k \leq M$. Now consider the space $\BC^n / V$, and proceed inductively as before. We deduce that if $[G:G_0] \neq 1$, then at least one of the $a_i(\xi)$ has the property that it has eigenvalues $\lambda \neq \mu$ such that $\lambda^k = \mu^k$ for some $1 < k \leq M$. 

Let $\BX \subset \BS$ be the set of all $\xi$ such that $G(\xi)$ can be conjugated into $\CB$, and let $\BX^c = \BS \setminus \BX$. Suppose there exists a matrix $X$ such that $Xa(\xi) X^{-1}$ and $X b(\xi) X^{-1}$ are both upper triangular. Using the polar decomposition on $\tup{GL}_n(\BC)$, we can find a matrix $Y \in U_n(\BC)$ such that $Ya(\xi) Y^{-1}$ and $Y b(\xi) Y^{-1}$ are both upper triangular. Given a sequence $\{\xi_i\}_i \subset \BX$ such that $\xi_i \to \xi_\infty$, compactness of $U_n(\BC)$ and passing to a subsequence gives that $\xi_\infty \in \BX$. Thus, $\BX$ is closed and $\BX^c$ is open in the Hausdorff topology on $\BS$. 

If $\BX = \BS$ we are done. Suppose that $\BX \neq \BS$. For every $k > 1$ and every $1 \leq j \leq m$ let $N_j(k)$ be the set of all $\xi$ such that $a_j(\xi)$ has eigenvalues $\lambda \neq \mu$ such that $\lambda^k = \mu^k$. For any $\xi$, let $p_k(\xi)$ be the characteristic polynomial of $a_j^k(\xi)$. The set $N_j$ is then the set of all $\xi$ such that:
  
$$\deg \tup{gcd}(p_1(\xi), p_1'(\xi)) <  \deg \tup{gcd}(p_k(\xi), p_k'(\xi))$$

\nid This can be restated as: 

$$\dim \ker p_1'(\xi) [a_j(\xi)] < \dim \ker p_k'(\xi)[a_j^k(\xi)] $$

\nid For any $r$, the set of all $\xi$ such that $$\dim \ker p_1'(\xi) [a(\xi)] = n - \rank \ker p_1'(\xi) [a(\xi)] \leq r$$ is Zariski open in the real Zariski topology on $\BS$, since it is given by a union of non-vanishing sets of determinants of $(n-j) \times (n-j)$ minors of $a_j(\xi)$. The same goes for the set of all $\xi$ such that $\dim \ker p_k'(\xi) [a_j^k(\xi)]  \leq r$. This implies that $N_a$ is a finite union of intersections of a Zariski open set with a Zariski closed set. Thus we have that $N_j \subset F_j$ for some Zariski closed $F_j$, such that $N_j$ is open in the induced Zariski topology on $F_a$. 

Since $\BX^c$ is open and contained in $N_1 \cup \ldots \cup N_m$, we have that one of the $F_j$'s is all of $\BS$ (since $\BS$ is irreducible). Thus one of $N_j$'s is dense (in the Hausdorff topology) in $\BS$. Assume it's $N_1$. 

Let $\CR$ be the space of unordered $\left( \begin{array}{c} n \\ 2 \end{array} \right)$-tuples of complex numbers. Let $\rho: \BS \to \CR$ be given by sending $\xi$ to the collection of all $\frac{\lambda}{\mu}$ where $\lambda, \mu$ are (not necessarily different) eigenvalues of $a_1(\xi)$. By the above, for any $\xi \in N_1$, the collection $\rho(\alpha)$ contains a root of unity $1 \neq \omega$ of degree at most $M$. The tuple $\rho(\zeta)$ contains only the number $1$. This is impossible, since $\rho$ is continuous and $1$ is not in the closure of the set of all non-1 roots of unity of degree at most $M$. Thus, $\BX = \BS$, as required.

\end{proof}

\nid We can now prove a stronger version of Lemma \ref{deformation}, that allows us to relax one of the conditions while still getting the same conclusion.

\begin{lemma} \label{deformation'} Let $\BS \subset \BT$ be an algebraic sub-torus. Let $a_1, \ldots a_m \in \tup{GL}_n(R)$. Suppose that $\langle a_1(\xi), \ldots,  a_m(\xi) \rangle$ is virtually solvable for every $\xi \in \BS$ and that there exists $\zeta \in \BS$ such that $\langle a_1(\zeta), \ldots, a_m(\zeta) \rangle$ is conjugate to a subgroup of $\CB$. Then $\langle a_1(\xi), \ldots,  a_m(\xi) \rangle$ is conjugate to a subgroup of $\CB$ for every $\xi \in \BS$. 

\end{lemma}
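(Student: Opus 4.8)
The plan is to reduce Lemma \ref{deformation'} to Lemma \ref{deformation} by replacing the matrices $a_i$ with new matrices $b_i$ whose specialization at $\zeta$ lands in $\CU$ rather than merely in $\CB$, and then to argue that conjugability of the $b_i(\xi)$ into $\CB$ yields conjugability of the $a_i(\xi)$ into $\CB$. The first step is to normalize: since $\langle a_1(\zeta),\ldots,a_m(\zeta)\rangle$ is conjugate into $\CB$, every $a_i(\zeta)$ is upper triangular after a fixed change of basis; let $d_i = \chi_i(\xi)$ denote the tuple of diagonal entries of $a_i(\zeta)$, which are (Laurent-polynomial, hence algebraic) functions of $\xi$ that extend to all of $\BS$. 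The idea is to multiply each $a_i$ by a suitable diagonal matrix $D_i(\xi)$, built from the $d_i$'s, so that $b_i := D_i a_i$ has $b_i(\zeta) \in \CU$. The subtlety here is that one cannot in general make \emph{every} product of generators unipotent by rescaling the generators individually — this is exactly why upper-triangularity, not unipotence, is the natural invariant. So instead I would pass to the group $\CB/\CU$, i.e. the diagonal torus, and observe that the images $\bar a_i(\xi)$ of the $a_i(\xi)$ in the diagonal torus (well-defined once we know $a_i(\xi)$ is upper triangular, but here we only know this at $\zeta$) still give a homomorphism; the point is to replace the ambient question about $\CB$ by the question about $\CU$ after quotienting out the abelian part.

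Concretely, here is the cleaner route I would actually carry out. Apply Lemma \ref{deformation} will require a point where the group is conjugate into $\CU$; we don't have one, but we can manufacture one by tensoring with a character. Fix the basis in which $a_i(\zeta) \in \CB$, and for each $i$ let $\lambda_{i,1}(\xi), \ldots, \lambda_{i,n}(\xi)$ be the diagonal entries of $a_i(\zeta)$ viewed as functions on $\BS$ (entries of $a_i$ are in $R$, so these are restrictions of Laurent monomials/polynomials). Since $\BS \subset \BT$, at $\xi = \zeta$ these diagonal entries are the eigenvalues of $a_i(\zeta)$, which lie on the unit circle because a virtually solvable group conjugate into $\CB$ with all generators of this form... actually this need not hold, so instead I would argue as follows: the commutator subgroup approach. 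Consider the subgroup $G'(\xi) = [\langle a_i(\xi)\rangle, \langle a_i(\xi)\rangle]$ generated by the (finitely many) commutators $c_{jk}(\xi) = [a_j(\xi), a_k(\xi)]$, which are again entries of $\tup{GL}_n(R)$. At $\zeta$, since $\langle a_i(\zeta)\rangle \subset \CB$ up to conjugacy, we get $\langle c_{jk}(\zeta)\rangle \subset [\CB,\CB] = \CU$. Moreover $\langle c_{jk}(\xi)\rangle$ is virtually solvable for every $\xi \in \BS$, being a subgroup of the virtually solvable $\langle a_i(\xi)\rangle$. Hence Lemma \ref{deformation} applies to the family $\{c_{jk}\}$ and yields that $\langle c_{jk}(\xi)\rangle$ is conjugate into $\CB$ for every $\xi \in \BS$.

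It remains to upgrade "the derived group is conjugate into $\CB$ at every $\xi$" to "the whole group is conjugate into $\CB$ at every $\xi$", using virtual solvability. Fix $\xi \in \BS$, write $G = \langle a_i(\xi)\rangle$, and let $\CG$ be its Zariski closure, which is virtually solvable; let $\CG^\circ$ be the identity component. The derived group $[\CG,\CG]$ is connected (derived group of a connected group) and, being solvable and now known to be conjugate into $\CB$ (its generating commutators are, and it is generated by conjugates of these inside a connected solvable group, so its Zariski closure is a connected solvable group which is therefore triangularizable), we may change basis so that $[\CG,\CG] \subset \CB$, in fact so that $[\CG,\CG] \subset \CU$ since a connected solvable group's derived group is unipotent. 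Then $\CG$ normalizes the unipotent group $U := [\CG,\CG]$, so $\CG$ acts on the flag variety; the fixed-point / Borel argument of Lie--Kolchin applied to the solvable $\CG^\circ$ gives $\CG^\circ \subset \CB$ after a further change of basis that can be chosen to preserve $U$, and then the finite group $\CG/\CG^\circ$, which also normalizes a maximal such flag, is handled exactly as in the proof of Lemma \ref{deformation} (via Lemma \ref{boundpowers} and the root-of-unity obstruction), giving $[G:G_0] = 1$ where $G_0 = \CG^\circ \cap G$. I expect the \textbf{main obstacle} to be precisely this last bookkeeping: ensuring that the change of basis triangularizing $[\CG,\CG]$ can be coordinated across all $\xi \in \BS$ well enough to re-run the Hausdorff-continuity argument of Lemma \ref{deformation} — i.e., that the set $\BX$ of good $\xi$ is still closed and its complement still forced into a proper Zariski-closed subset by an eigenvalue-ratio-root-of-unity argument. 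In practice I would avoid coordinating bases globally and instead simply observe that Lemma \ref{deformation}, once applied to $\{c_{jk}\}$, already gives the derived group into $\CB$ pointwise, and then the pointwise Lie--Kolchin-plus-Jordan--Schur argument above finishes each $\xi$ separately, so no second continuity argument is needed.
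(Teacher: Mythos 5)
The opening move — applying Lemma \ref{deformation} to commutators — is the same as the paper's, though the paper applies it to the full derived subgroup $G'=[G,G]$ (which it first shows is finitely generated, because $G$ is virtually nilpotent via Lemma \ref{boundpowers}), not merely to the group generated by the generator-commutators $c_{jk}$; the latter is in general a proper subgroup of $G'$, so your conclusion is weaker than what the subsequent argument needs. More seriously, the pointwise finish is wrong in several places. First, $\CG$ need not be connected, so $[\CG,\CG]$ need not be connected and need not be unipotent: take $G\cong S_3$ acting irreducibly on $\BC^2$; then $\CG=G$ is finite, $[\CG,\CG]\cong\BZ/3\BZ$ is a diagonalizable (non-unipotent) group conjugate into $\CB$, and yet $G$ itself is \emph{not} conjugate into $\CB_2$ since the representation is irreducible. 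This example also shows that the implication you want — ``$G$ virtually solvable and $G'$ conjugate into $\CB$ implies $G$ conjugate into $\CB$'' — is simply false as a pointwise statement, so no amount of Lie--Kolchin bookkeeping at a single $\xi$ can close the gap.

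Your final sentence, claiming ``no second continuity argument is needed,'' is exactly where the proposal breaks. The paper's proof does require a second continuity argument, and it is genuinely different from the one in Lemma \ref{deformation}: after obtaining a normal finite-index $H$ with $H(\xi)$ conjugate into $\CB$ for all $\xi$ and $G/H$ finite abelian, it builds an inductive filtration $V_i$ of common eigenspaces of $H(\xi)$, shows $G(\xi)$ is conjugate into $\CB$ iff the induced permutation actions $\gamma_i$ on the isotypic blocks of $H(\xi)$ are trivial, and then proves the set $\BX$ of good $\xi$ is \emph{open} (as well as closed) by a perturbation argument: near a point $\zeta_0\in\BX$, the matrices $r_1(a_j(\xi))$ are close to upper triangular, hence cannot permute the isotypic blocks nontrivially. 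Connectedness of $\BS$ then gives $\BX=\BS$. The root-of-unity obstruction you cite from Lemma \ref{deformation} is itself a continuity argument, and it exploited the hypothesis that $G(\zeta)\subset\CU$ (so all eigenvalue ratios at $\zeta$ equal $1$); under the weaker hypothesis $G(\zeta)\subset\CB$ that obstruction is no longer available, and the openness argument is what replaces it.
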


\begin{proof}

Let $R_0 = \BC[\BS]$ be the ring of functions on $\BS$. Let $G = \langle a_1, \ldots, a_m \rangle$, where each $a_i$ is thought of as an element of $\tup{GL}_n(R_0)$. By our assumption and by Lemma \ref{boundpowers}, this group is virtually nilpotent, and thus its commutator subgroup is finitely generated. Let $G'$ be the commutator subgroup of $G$. By our assumption on $\zeta$,  and the fact that $[\CB, \CB] = \CU$, we have that $G'(\zeta)$ can be conjugated into $\CU$. Apply Lemma \ref{deformation} to $G'$ to get that $G'[\xi]$ can be conjugated into $\CB$ for every $\xi \in \BS$. 

Using Lemma \ref{boundpowers}, we have that $G$ contains a finite index subgroup $H$ such that $H(\xi)$ can be conjugated into $\CB$ for every $\xi \in \CS$. Since $G'(\xi)$ can be conjugated into $\CB$, we can pick $H$ such that $A = G/H$ is finite and abelian. 

Given any $\xi \in \BS$ let $V_1(\xi)$ be the space of common eigenvectors of all the elements of $H(\xi)$. The space $V_1(\xi)$ is $G(\xi)$-invariant. Restriction to $V_1(\xi)$ gives a homomorphism $r_1: G(\xi) \to \tup{GL}(V_1(\xi))$. The group $G(\xi)$ acts on the space $\BC^n/V_1$. Let $V_2(\xi)$ be the subspace of all the common eigenvectors of the elements of $H(\xi)$ in $\BC^n/V_1$. Define the restriction map $r_2: G \to \tup{GL}(V_2)$ as above. Proceed inductively in this manner to define $V_i, r_i$ for every $i$.

The action of $G(\xi)$ on itself by conjugation gives a homomorphism $\gamma_i: G(\xi) \to \tup{Aut}(r_i(H(\xi)))$. By definition, $H(\xi) \leq \ker \gamma_i$ for every $i$. Suppose that the image of $\gamma_i$ is the trivial automorphism for every $i$. Since $A$ is abelian, and since abelian groups of matrices can be conjugated into $\CB$, we get that $G(\xi)$ can be conjugated into $\CB$. Thus, it is enough to prove that the image of each $\gamma_i$ is trivial, for every $\xi$. 

 Let $\calD \subset \tup{GL}_n(\BC)$ be the group of invertible diagonal matrices and let $D = \langle d_1, \ldots, d_k \rangle < \calD$ be a subgroup.  We can write $\BC^n = \bigoplus_{i=1}^s W_i$, where $W_i$ is a maximal subspace such that $d_j|_{W_i}$ is a multiple of $I_{W_i}$ for every $j$. The group $N(D)$, the normalizer of $D$, permutes the subspaces $W_i$. If an element $g \in N(D)$ fixes the subspace $W_i$, then the restriction of $g$ to $W_i$ commutes with every $d_j|_{W_i}$.

As in the proof of Lemma \ref{deformation}, let $\BX \subset \BS$ be the set of all $\xi$ such that $G(\xi)$ can be conjugated into $\CB$. The same proof shows that $\BX$ is closed in the Hausdorff topology on $\BS$. We now wish to show that it is also open. Since $\CS$ is connected, and since we assumed that $\BX \neq \emptyset$, this will conclude the proof. 

Let $\zeta_0 \in \BX$, and let $\xi \in \BS$. Without loss of generality, we can assume that $G(\zeta_0) \subset \CB$. Let $V_i, r_i$ be as above. Let $D$ be the restriction of $H(\xi)$ to $V$. Write $V_1= \bigoplus_{i=1}^s W_i$ as above. The group $r_1(G(\xi))$ normalizes $D$. 

For any norm on matrices, and any  $\epsilon > 0$, if $\xi$ is sufficiently close to to $\zeta_0$ then $r_1(a_j(\xi))$ has distance $ < \epsilon$ from an upper triangular matrix for every $j$. If $\epsilon$ is sufficiently small, such a matrix cannot permute the spaces $W_i$  non-trivially. Thus, $r_1(a_j(\xi))$ commutes with all elements of $D$, so $\gamma_1(G(\xi)) = 1$. Proceeding inductively in this manner, we get that $\gamma_i(G(\xi)) = 1$ for every $i$, and thus $\xi \in \BX$. This implies that $\BX$ is open, as required. 

\end{proof}

\nid We are now ready to prove the result we need. 

\begin{lemma} \label{deformation2}
 Let $\BS \subset \BT$ be an algebraic sub-torus. Let $a_1, \ldots a_m \in \tup{GL}_n(R)$. Let $Y \subset \BS$ be a dense set. Suppose that Suppose that $\langle a_1(\xi), \ldots,  a_m(\xi) \rangle$ is virtually solvable for every $\xi \in Y$ and that there exists $\zeta \in Y$ such that $\langle a_1(\zeta), \ldots, a_m(\zeta) \rangle$ is conjugate to a subgroup of $\CB$. Then $\langle a_1(\xi), \ldots,  a_m(\xi) \rangle$ is conjugate to a subgroup of $\CB$ for every $\xi \in \BS$. 
\end{lemma}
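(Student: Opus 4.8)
The plan is to deduce Lemma~\ref{deformation2} from Lemma~\ref{deformation'}: the only difference between the two statements is that virtual solvability of $\langle a_1(\xi),\ldots,a_m(\xi)\rangle$ is now assumed only on a dense set $Y$ rather than on all of $\BS$, so it suffices to prove that
$$V=\{\xi\in\BS:\ \langle a_1(\xi),\ldots,a_m(\xi)\rangle\ \text{is virtually solvable}\}$$
is closed in the Hausdorff topology on $\BS$. Granting this: a sub-torus is irreducible, so whether "dense" is read in the Zariski or Hausdorff sense we get $\overline Y=\BS$, and since $Y\subseteq V$ with $V$ closed this forces $V=\BS$; then virtual solvability holds at every point of $\BS$, the point $\zeta\in Y\subseteq\BS$ still witnesses conjugacy into $\CB$, and Lemma~\ref{deformation'} yields the conclusion.

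To prove $V$ is closed I would argue sequentially, using the uniform structure theory of virtually solvable linear groups (in the spirit of Lemma~\ref{boundpowers}): there are constants $c(n),d(n)$ depending only on $n$ such that every virtually solvable subgroup of $\GL_n(\BC)$ has a normal solvable subgroup of index $\le c(n)$ whose derived length is $\le d(n)$. Write $F_m=\langle t_1,\ldots,t_m\rangle$ and $G(\xi)=\langle a_1(\xi),\ldots,a_m(\xi)\rangle$, the image of $F_m$ under $t_k\mapsto a_k(\xi)$. Let $\xi_i\to\xi_\infty$ with $\xi_i\in V$, and for each $i$ pick such an $N_i\lhd G(\xi_i)$; pulling back $G(\xi_i)\twoheadrightarrow G(\xi_i)/N_i$ along $F_m\twoheadrightarrow G(\xi_i)$ gives a surjection $F_m\twoheadrightarrow Q_i$ with $|Q_i|\le c(n)$, with $N_i$ the image of $\ker(F_m\twoheadrightarrow Q_i)$. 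Only finitely many pairs (finite group of order $\le c(n)$, surjection from $F_m$) occur, so after passing to a subsequence all these data are constant: there is one finite-index $N\le F_m$ with a fixed generating set $N=\langle w_1,\ldots,w_l\rangle$ such that $N_i=\langle w_1(a_k(\xi_i)),\ldots,w_l(a_k(\xi_i))\rangle$ is normal of index $\le c(n)$ and of derived length $\le d(n)$ for all $i$ in the subsequence. Now "$\langle w_1(a_k(\xi)),\ldots,w_l(a_k(\xi))\rangle$ has derived length $\le d(n)$" is the conjunction, over all depth-$d(n)$ iterated commutator words $W$ in $l$ letters, of the matrix identities $W(w_1(a_k(\xi)),\ldots,w_l(a_k(\xi)))=I_n$; since the entries of each $a_k(\xi)$ are Laurent polynomials in $\xi$, each identity cuts out a Zariski-closed (hence Hausdorff-closed) subset of $\BS$, so their common locus $C$ is closed. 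Then $\xi_i\in C$ gives $\xi_\infty\in C$, so $N_\infty:=\langle w_1(a_k(\xi_\infty)),\ldots,w_l(a_k(\xi_\infty))\rangle$ is solvable; it is normal in $G(\xi_\infty)$ because $N\lhd F_m$, and $G(\xi_\infty)/N_\infty$ is a quotient of $F_m/N$, hence of order $\le c(n)$. So $G(\xi_\infty)$ is virtually solvable, $\xi_\infty\in V$, and (as $\BS$ is metrizable) $V$ is closed.

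The one non-routine input is the uniform bound just quoted, and what matters is that both $c(n)$ and $d(n)$ depend on $n$ alone: the index bound so that only finitely many finite quotients $Q$ occur, allowing passage to a subsequence on which they are constant, and the derived-length bound so that solvability of $N_i$ becomes a single Zariski-closed condition rather than a countable union of them. I expect locating the cleanest citation for these bounds (Jordan--Schur for the index, Lie--Kolchin/Zassenhaus for derived length) to be the only real friction. As a fallback, one could instead prove that the non-virtually-solvable locus is Hausdorff-open by exhibiting, at any non-virtually-solvable point, a ping-pong pair à la Tits' proof of the Tits alternative, realized on the projective space of a fixed polynomial-functor representation of $\GL_n$ so that the ping-pong inequalities survive small perturbations of the $a_k(\xi)$; but the argument above is preferable since it stays entirely within polynomial conditions.
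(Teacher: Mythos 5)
Your proposal is correct, and at bottom it follows the same strategy as the paper: use a uniform bound on finitely generated virtually solvable subgroups of $\tup{GL}_n(\BC)$ to show that the locus $\{\xi\in\BS : G(\xi)\text{ is virtually solvable}\}$ is Hausdorff-closed (equivalently, its complement is open), conclude from density of $Y$ that it is all of $\BS$, and then invoke Lemma~\ref{deformation'}. The paper reaches this by a shorter route than yours, and the difference is worth noting. Rather than passing to a subsequence of $\xi_i$ along which the finite quotient $F_m\twoheadrightarrow Q_i$ is constant and then fixing a generating set $w_1,\ldots,w_l$ of the bounded-index kernel, the paper takes $M$ (from Lemma~\ref{boundpowers}, via Lemma~\ref{deformation}) such that $K^M$ can be conjugated into $\CB$ for every virtually solvable $K<\tup{GL}_n(\BC)$, and argues by contradiction: if some $G(\xi_0)$ were not virtually solvable, there would be a word $\gamma$ in the generators $a_1,\ldots,a_m$ lying in $(G^M)^{(n+1)}$ with $\gamma(\xi_0)\neq I_n$; continuity of $\xi\mapsto\gamma(\xi)$ then makes $\gamma(\xi)\neq I_n$ on a Hausdorff-open neighborhood of $\xi_0$, forcing $G(\xi)$ to be non--virtually-solvable there and contradicting density of $Y$. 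This avoids both the subsequence extraction and the need to enumerate commutator identities. Two small points in your write-up: (i) ``derived length $\leq d(n)$'' for $\langle w_1(a(\xi)),\ldots,w_l(a(\xi))\rangle$ is the vanishing of $\delta_{d(n)}$ on all $2^{d(n)}$-tuples of group elements, hence a \emph{countable} family of matrix identities indexed by tuples of arbitrary words in the $w_j$'s, not a finite one indexed by iterated commutators of the generators alone; this is harmless, since any intersection of Zariski-closed sets is Hausdorff-closed, but should be said. (ii) Your sequential argument tacitly uses that $\BS$ is metrizable; better is simply to argue, as the paper does, directly with open neighborhoods, which also sidesteps choosing a sequence in $Y$ converging to a given $\xi$.
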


\begin{proof}
Let $G$ be as in the proof of Lemma \ref{deformation}. Suppose $G(\xi)$ is not virtually solvable. Let $M$ be the number given in the the argument of Lemma \ref{deformation}, such that for any virtual solvable $K < \tup{GL}_n(\BC)$, the group $K^M$ can be conjugated into $\CB$.  Since $G(\xi)$ is not solvable, there is some $I_n \neq \gamma$ in $(G^M)^{(n+1)}$, the $(n+1)^{th}$ term in the derived series of $G^M$. We can consider $\gamma$ as a word in the generators $a_1, \ldots, a_m$, and thus define $\gamma(\xi')$ for any $\xi'$. Since $\gamma(\xi) \neq I_n$, we have that $\gamma(\xi') \neq I_n$ in some open neighborhood of $\xi$. Thus, $G(\xi')$ cannot be virtually solvable for any such $\xi'$. This contradicts our assumption on the density of $Y$. Thus, $Y = \BS$ and we can apply Lemma \ref{deformation'} to prove the result. 

\end{proof}

\subsection{Proof of Proposition \ref{prop1}}

Given $G \leq \tup{Aut}(F_n)$, let $G_* \in \tup{Aut}(H_1(F_n, \BZ))$ be the induced homomorphism.

\begin{lemma} \label{uppertriangularmagnus} Suppose that  $G_*$ can be conjugated into $\CB$ and that $G$ has the property that the image  of $G$ in $\tup{GL}(\CC)$ is virtually solvable for abelian every cover to which $G$ lifts.  Let $\BT = H_1(F_n, \BZ)^*$ be the character torus of $F_n$, and let $\BS \subset \BT$ be the sub-torus of all $G$-invariant characters. For any $\xi \in \BS$, let $G(\xi)$ be the group generated by $M_\gamma(\xi)$, for all $\gamma \in G$. Then for any $\xi \in \BS$, the group $G(\xi)$ can be conjugated into the group of upper triangular $n \times n$ matrices. 

\end{lemma}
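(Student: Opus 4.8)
The plan is to recognize $\{G(\xi)\}_{\xi\in\BS}$ as a family of the kind handled by Lemma~\ref{deformation2} and to apply that lemma. First I would fix a finite generating set $\gamma_1,\dots,\gamma_m$ of $G$ and put $a_i=M_{\gamma_i}$, viewed in $\GL_n(R)$ with $R=\BC[X_1^{\pm1},\dots,X_n^{\pm1}]$ (the Magnus matrix of an automorphism of $F_n$ is invertible over $R$). The assignment $\gamma\mapsto M_\gamma$ is only a crossed homomorphism, obeying the Fox chain rule $M_{\gamma\delta}(X)=M_\gamma(X)\,M_\delta(\gamma^{*}X)$; but for $\xi$ in the torus $\BS$ of $G$-invariant characters one has $\gamma^{*}\xi=\xi$ for every $\gamma\in G$, the twist disappears, and $\gamma\mapsto M_\gamma(\xi)$ becomes a genuine homomorphism $G\to\GL_n(\BC)$. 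Hence $G(\xi)=\langle a_1(\xi),\dots,a_m(\xi)\rangle$, matching the setup of Lemma~\ref{deformation2}; and $\BS$ is an algebraic sub-torus of $\BT$, being cut out by the integral monomial equations expressing $\gamma_i^{*}\xi=\xi$. It then remains to verify the two hypotheses of Lemma~\ref{deformation2}: a parameter whose group conjugates into $\CB$, and a dense set of parameters with virtually solvable groups.

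For the first I would take $\zeta=\mathbf 1\in\BS$, the trivial character. Specialization at $\mathbf 1$ is the augmentation $R\to\BC$, under which a Fox derivative $\partial w/\partial x_j$ becomes the $x_j$-exponent sum of $w$; thus $M_\gamma(\mathbf 1)$ is the matrix of $\gamma_{*}$ in some basis, so $G(\mathbf 1)$ is conjugate to $G_{*}$, hence into $\CB$ by hypothesis. For the second I would take $Y\subset\BS$ to be the set of torsion characters; since $\BS$ is defined by integral monomial equations, $Y$ is dense in $\BS$, and $\mathbf 1\in Y$. The real work is to show $G(\xi)$ is virtually solvable for each $\xi\in Y$. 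Given such a $\xi$, of order $d$, I would pass to the finite abelian cover $q\colon\wt R\to R$ determined by $K=\pi^{-1}(\ker\xi)\le F_n$, where $\pi\colon F_n\to H_1(F_n,\BZ)$; its deck group is $H_1(F_n,\BZ)/\ker\xi$, cyclic of order $d$. As $\xi$ is $G$-invariant, $\ker\xi$ is $\gamma_{*}$-invariant for every $\gamma\in G$, hence $\gamma(K)=K$, so $G$ lifts to $q$ and, by the standing hypothesis, the image $\bar G$ of $G$ in $\GL(\CC_q)$ is virtually solvable. Every character of the deck group is a power of $\xi$, hence $G$-invariant, so by Observation~\ref{blockmatrix} each $\gamma_{*}\colon\CC_q\to\CC_q$ is block diagonal for $\CC_q=\bigoplus_{k=0}^{d-1}\CC_q[\xi^{k}]$ with diagonal blocks $M_\gamma(\xi^{k})$; projecting onto the block $\CC_q[\xi]\cong\BC^n$ exhibits $G(\xi)$ as a quotient of $\bar G$, so $G(\xi)$ is virtually solvable.

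With both hypotheses in hand, Lemma~\ref{deformation2} gives that $G(\xi)$ is conjugate into $\CB$ for every $\xi\in\BS$, which, since each $M_\gamma(\xi)$ is invertible, is exactly the claim that $G(\xi)$ lies in the group of upper triangular matrices. (If one requires a sub-torus to be connected, one applies Lemma~\ref{deformation2} to the identity component of $\BS$, which still contains $\mathbf 1$ and a dense set of torsion points and is the case used in the proof of Proposition~\ref{prop1}.) I expect the one genuinely delicate point --- everything else of substance residing in Lemmas~\ref{deformation}--\ref{deformation2} --- to be the middle paragraph: correctly converting the specialization of the Magnus matrix at a torsion character $\xi$ into a diagonal block of the $G$-action on the finite abelian cover cut out by $\xi$ (via Observation~\ref{blockmatrix}), and confirming that this cover is one to which $G$ lifts, so that the hypothesis governing all such covers applies.
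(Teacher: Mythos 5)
Your proposal is correct and follows essentially the same route as the paper: reduce via Lemma~\ref{deformation2} to the dense set of torsion characters, anchor at the trivial character where $M_\gamma(\mathbf 1)=\gamma_*$ gives $G(\mathbf 1)$ conjugate into $\CB$, and for each finite-order $\xi\in\BS$ realize $G(\xi)$ as a block of the $G$-action on $\CC_q$ for the corresponding finite abelian cover via Observation~\ref{blockmatrix}, from which virtual solvability of $G(\xi)$ follows from the hypothesis on $\tup{GL}(\CC_q)$. You supply some details the paper leaves implicit (the crossed-homomorphism relation collapsing to a homomorphism on $\BS$, the identification $G(\mathbf 1)\cong G_*$, and the connectedness caveat for the sub-torus), but the argument is the same.
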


\begin{proof}
By Lemma \ref{deformation2}, it is enough to prove this for the set of all characters in $\BS$ with finite image (This set is dense in the torus $\BS$, which is defined over $\BQ$). The group $G(1)$ can be conjugated into upper triangular matrices by assumption. For any $\xi \in \CS$, the character $\xi$ is the character of the deck group of some admissible finite abelian cover $R' \to R$ to which $\Gamma$ can be lifted. By observation \ref{blockmatrix}, we can choose a basis such that every element of the image of $G$ in $\tup{GL}(H_1(R',\BC))$ can be written in block matrix form as a diagonal matrix with matrices of the form $G(\xi)$ along the diagonal. Thus, if the image of $G$ is virtually solvable, then so is each such $G(\xi)$. The result now follows from Lemma \ref{deformation2}.

\end{proof}

\begin{lemma} \label{nschains} If $p: R' \to R$ is an abelian cover to which $\Gamma$ lifts, and the image of $\Gamma$ in $\tup{GL}(H_1(\CC_p, \BC))$ is not solvable then neither is the image of $\Gamma$ in $\tup{GL}(H_1(R', \BC))$
\end{lemma}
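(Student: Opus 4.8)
The plan is to exploit the fact that $H_1(R',\BC)$ sits inside the chain space $\CC_p = C_1(\wt R,\BC)$ (writing $\wt R = R'$) as a $\Gamma$-submodule whose complementary data is controlled by the finite deck group. Since $\wt R$ is a graph it has no $2$-cells, so $H_1(R',\BC) = \ker\bigl(\partial\colon \CC_p \to C_0(\wt R,\BC)\bigr)$; and since $\wt R$ is connected, the image of $\partial$ is the augmentation ideal $B$ of $C_0(\wt R,\BC)$. The action of $\Gamma$ on $\CC_p$ is the restriction of an action on the cellular chain complex $C_\ast(\wt R,\BC)$ compatible with $\partial$, so there results a short exact sequence of $\Gamma$-modules $0 \to H_1(R',\BC)\to \CC_p \xrightarrow{\partial} B \to 0$. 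On $C_0(\wt R,\BC)\cong\BC[G]$, where $G = F_n/K$ is the deck group, the group $\Gamma$ acts only through the induced action on $G$, i.e.\ through the finite group $\bar\Gamma$ that is the image of $\Gamma\to\tup{Aut}(G)$; moreover the permutation representation of $\tup{Aut}(G)$ on $\BC[G]$ is faithful and $\BC\!\cdot\!\sum_{g} g$ is a trivial $\Gamma$-submodule complementary to $B$, so the image of $\Gamma$ in $\tup{GL}(B)$ is isomorphic to $\bar\Gamma$.

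Next I would use the elementary observation that inside $\tup{GL}(\CC_p)$ the elements acting trivially on both $H_1(R',\BC)$ and $\CC_p/H_1(R',\BC)$ form an abelian subgroup (each such element is $I+N$ with $N^2=0$, and they compose additively). Writing $\phi\colon\Gamma\to\tup{GL}(\CC_p)$ for the given representation and $\phi_H,\phi_B$ for its restrictions to $H_1(R',\BC)$ and to $B$, it follows that $\phi(\Gamma)$ has a normal abelian subgroup whose quotient embeds in $\phi_H(\Gamma)\times\phi_B(\Gamma)$. Hence, if $\phi(\Gamma)$ is not solvable, then $\phi_H(\Gamma)=\rho_p(\Gamma)$ is not solvable or $\phi_B(\Gamma)\cong\bar\Gamma$ is not solvable. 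I note that for the \emph{virtually} solvable analogue this already finishes the argument, since $\bar\Gamma$ is finite, hence virtually solvable; the next paragraph is needed only because the lemma is stated for plain solvability.

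To absorb the second alternative I would show that $\bar\Gamma$ is always a quotient of $\rho_p(\Gamma)$. Decompose $H_1(R',\BC)=\bigoplus_{\xi\in\hat G}H_1(R',\BC)[\xi]$ into isotypic components for $G$; an element $\gamma\in\Gamma$ carries the $\gamma^*\xi$-component onto the $\xi$-component, so $\rho_p(\Gamma)$ surjects onto the group of permutations it induces on the set of non-zero components. Provided $\tup{rk}\,\pi_1(\Sigma)=n\ge 2$, every component with $\xi\neq 1$ is non-zero (indeed $\dim\CC_p[\xi]=n$ while $\partial(\CC_p[\xi])$ lies in the one-dimensional space $C_0(\wt R,\BC)[\xi]$, so $\dim H_1(R',\BC)[\xi]\ge n-1\ge 1$), and $\tup{Aut}(\hat G)$ acts faithfully on $\hat G\setminus\{1\}$; hence that permutation group is isomorphic to the image of $\Gamma$ in $\tup{Aut}(\hat G)$, which is isomorphic to $\bar\Gamma$. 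Thus $\bar\Gamma$ non-solvable forces $\rho_p(\Gamma)$ non-solvable, and in either case non-solvability of $\phi(\Gamma)$ gives non-solvability of $\rho_p(\Gamma)$. (When $n\le 1$ the group $\tup{Mod}(\Sigma)$ is virtually abelian and the lemma is vacuous.)

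The one genuine subtlety --- and the reason a bare ``a quotient of a solvable-by-solvable group is solvable'' argument does not suffice --- is that $\CC_p/H_1(R',\BC)$ need not carry a solvable $\Gamma$-action: over a cover that is not $\Gamma$-invariantly trivial the deck group may be acted on by a non-solvable subgroup of $\tup{Aut}(G)$. So the heart of the matter is the previous paragraph, showing that any non-solvability supported on that quotient is already visible on $H_1(R',\BC)$ through the character decomposition. In the intended application every cover is $\Gamma_0$-admissible, and there $\Gamma_0$ acts trivially on $G$ because conjugation is trivial on abelianizations, so $\bar\Gamma$ is trivial and the argument collapses to the first two paragraphs.
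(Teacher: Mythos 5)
Your proof is correct, and it takes a genuinely different route from the one in the paper. The paper's argument runs through eigenvalues: it first establishes (via a normalization/limit argument exploiting that any path in a graph can be closed to a loop with a bounded number of extra edges) that an element $h_*$ has eigenvalues off the unit circle on $C_1(R',\BC)$ if and only if it does on $H_1(R',\BC)$; it then lets $K=\ker\bigl(\Gamma\to\tup{GL}(H_1(R',\BC))\bigr)$, deduces via Kronecker's theorem that every eigenvalue of every element of $K_*$ acting on chains is a root of unity, and invokes the Tits alternative to conclude that $K_*$ (and hence, being a solvable-by-solvable extension, the full image on chains) is virtually solvable. Your argument replaces all of that with the structural observation that the chain complex gives a short exact sequence of $\Gamma$-modules $0\to H_1(R',\BC)\to C_1(R',\BC)\xrightarrow{\partial}\tup{im}\,\partial\to 0$, that the kernel of the resulting map of $\phi(\Gamma)$ into $\tup{GL}(H_1)\times\tup{GL}(\tup{im}\,\partial)$ is abelian (composed of $I+N$ with $N^2=0$), and that $\Gamma$ acts on $C_0(R',\BC)\cong\BC[G]$ only through the finite group $\bar\Gamma=\tup{im}(\Gamma\to\tup{Aut}(G))$. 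This is cleaner and requires no deep input (no Kronecker, no Tits, no limit argument). It is also worth noting that the paper's own proof and its application in Proposition \ref{prop1} are really about the \emph{virtually} solvable property, which makes the statement's ``not solvable'' an imprecision; your proof handles the virtually solvable version in two lines (since $\bar\Gamma$ is finite), and your third paragraph --- showing $\bar\Gamma$ is a quotient of $\rho_p(\Gamma)$ via the character decomposition of $H_1(R',\BC)$, using $\dim H_1(R',\BC)[\xi]=n-1\ge 1$ for $\xi\ne 1$ and faithfulness of the $\tup{Aut}(\widehat G)$-action on $\widehat G\setminus\{1\}$ --- is a correct and self-contained way to handle the stronger literal statement. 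Your closing remark that in the actual application $\Gamma_0$ acts trivially on the deck group, so $\bar\Gamma_0=1$, is also accurate and makes explicit how little is really needed.
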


\begin{proof}
Since every cycle in a graph is also a chain, we have a natural inclusion $H_1(R', \BC) \hookrightarrow C_1(R', \BC)$ as a $\Gamma_*$-invariant subspace. We begin by claiming that for every $h \in \Gamma$, $h_*: C_1(R', \BC) \to C_1(R', \BC)$ has eigenvalues off the unit circle if and only if $h_*: H_1(R', \BC) \to H_1(R', \BC)$ has eigenvalues off the unit circle. 

The if direction is obvious. We now show the only if direction. For brevity, denote $V = C_1(R', \BC)$ and $W = H_1(R', \BC)$.  The space $V$ is spanned by elements of the form $1 \cdot e$, where $e$ ranges over all edges of $R'$. Pick any norm $\|\cdot \|$ on $V$, and let $\lambda > 1$ be the spectral radius of the action of $h_*$ on $V$. Then there exists an edge $e$ such that $$\lim \sup_{j \to \infty} \frac{1}{j} \log \|h_* (e) \| = \log \lambda  $$
Denote $L_\lambda$ to be the direct sum of all generalized eigenspaces corresponding to eigenvalues with absolute value $\lambda$.  Setting $\nu_j = \frac{h_*^j (e)}{ \|h_*^j (e) \|}$, we have that the distance from $\nu_j$ to $L_\lambda$ goes to $0$ as $j \to \infty$.
Note that $\varphi^j(e)$ corresponds to a path in $R'$, and any such path can be closed to a loop using a bounded number of edges. Thus, the distance of $\nu_j$ from $W$ goes to $0$ as $j \to \infty$.  Therefore $L_\lambda \cap W \neq \{0 \}$. Since this space is $h_*$-invariant, it contains an eigenvector with eigenvalue of absolute value $\lambda$.

Now, assume that $\Gamma_*: W \to W$ is virtually solvable. Then $\Gamma$ contains a subgroup $K \lhd \Gamma$ such that $\Gamma / K$ is virtually solvable and $K_*: W \to W$ is trivial. Thus, every eigenvalue of every element of  $K_*: V \to V$ is on the unit circle. Since $\Gamma$ preserves the lattice of chains with coefficients in $\BZ$, the group $K_*$ can be conjugated into $\tup{GL}(C_1(R', \BZ))$. By a well known theorem of Kronecker, we have that all eigenvalues of all elements of $K_*$ are roots of unity. By passing to a finite index subgroup, we can assume that all eigenvalues of every element of $K$ is $1$.  By Tits' proof of the Tits alternative (\cite{TitsA}), a non-virtually solvable linear group contains a free group generated by two elements that have an eigenvalue of absolute value $>1$. Thus, $K_*$ must be virtually solvable. But this implies that $\Gamma_*: V \to V$ is also virtually solvable, which is a contradiction.

\end{proof}

\nid We are now able to prove Proposition \ref{prop1}.

\begin{prop} Suppose $(\Gamma_0)_*: H_1(F_n, \BC) \to H_1(F_n, \BC)$ is conjugate to a subgroup of upper triangular matrices and that $\rho_p(\Gamma_0)$ is virtually solvable for every $\Gamma_0$ admissible $p$.  Then $\rho_p(\Gamma_0)$ can be conjugated into the group of upper triangular matrices for every $\Gamma_0$-admissible $p$. 

\end{prop}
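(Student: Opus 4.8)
The plan is to reduce the statement to the rigidity machinery of Lemma~\ref{deformation2} (essentially Lemma~\ref{uppertriangularmagnus}), once two bookkeeping points are settled: that a $\Gamma_0$-admissible cover is an abelian cover on whose deck group $\Gamma_0$ acts trivially, and that the characters which occur form a dense subset of the relevant torus.

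First I would unwind admissibility. Since $\Gamma_0<\tup{Aut}(F_n)$ acts on $\Gamma_0\rtimes F_n$ by inner automorphisms — conjugation by $(\gamma,1)$ sends $(1,x)$ to $(1,\gamma(x))$ — it acts trivially on $H:=H_1(\Gamma_0\rtimes F_n,\BZ)/\tup{(torsion)}$, and the structure map $\phi\colon F_n\to H$ satisfies $\phi\circ\gamma=\phi$ for every $\gamma\in\Gamma_0$. Hence for $\Gamma_0$-admissible $p$ with $K=\phi^{-1}(L)$, $[H:L]<\infty$, the subgroup $K$ is $\Gamma_0$-invariant, the deck group $G_p=F_n/K\hookrightarrow H/L$ is finite abelian, and $\Gamma_0$ acts trivially on $G_p$. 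Consequently $\Gamma_0$ lifts to the cover, and Observation~\ref{blockmatrix} (with $\gamma^*=\mathrm{id}$ on the characters of $G_p$) gives that, in a suitable basis, the image of $\Gamma_0$ in $\tup{GL}(\CC_p)$ is block diagonal, with the block attached to a character $\xi$ of $G_p$ equal to $M_\gamma(\xi)$. The same computation identifies the characters $\xi$ arising from admissible $p$ with the finite-order $\Gamma_0$-invariant characters of $F_n$, that is, with the torsion points of the sub-torus $\BS\subset\BT=H_1(F_n,\BZ)^*$ of $(\Gamma_0)_*$-invariant characters (more precisely of the identity component of $\BS$, on which they are dense since $\BS$ is defined over $\BQ$).

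Next I would pass from the homological representation to the chain space. For a fixed admissible $p$ one has $H_1(R',\BC)\hookrightarrow\CC_p=C_1(R',\BC)$ as a $\Gamma_0$-invariant subspace carrying $\rho_p$; by hypothesis $\rho_p(\Gamma_0)$ is virtually solvable, so Lemma~\ref{nschains}, in contrapositive form, makes the image of $\Gamma_0$ in $\tup{GL}(\CC_p)$ virtually solvable. Combined with the block-diagonal description, each $G(\xi):=\langle M_\gamma(\xi):\gamma\in\Gamma_0\rangle$ is then virtually solvable, being a quotient of a virtually solvable group, for $\xi$ ranging over a dense subset of $\BS$. Since also $G(1)=(\Gamma_0)_*$ is conjugate into $\CB$ by hypothesis, Lemma~\ref{deformation2} — applied with the matrices $M_{\gamma_i}\in\tup{GL}_n(\BC[X_1^{\pm1},\dots,X_n^{\pm1}])$ for a generating set $\gamma_1,\dots,\gamma_m$ of $\Gamma_0$, the dense set of admissible-cover characters, and $\zeta=1$; this is just the conclusion of Lemma~\ref{uppertriangularmagnus} — yields that $G(\xi)$ is conjugate into $\CB$ for \emph{every} $\xi\in\BS$, in particular for every character $\xi$ of the deck group of every admissible $p$.

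Finally I would reassemble. For a fixed admissible $p$, conjugating each block $M_\gamma(\xi)$ into $\CB$ and taking the direct sum (a block-diagonal matrix with upper triangular blocks is upper triangular) shows the image of $\Gamma_0$ in $\tup{GL}(\CC_p)$ is conjugate into $\CB$; intersecting the resulting $\Gamma_0$-invariant complete flag of $\CC_p$ with the $\Gamma_0$-invariant subspace $H_1(R',\BC)$ and refining yields a $\Gamma_0$-invariant complete flag there, so $\rho_p(\Gamma_0)$ is conjugate into $\CB$, as required. The genuinely non-formal input is Lemma~\ref{nschains} together with the deformation lemmas, all already proved; the delicate point in this argument is the twofold role of admissibility — it is what forces $\Gamma_0$ to act trivially on the deck group (so that Observation~\ref{blockmatrix} gives a genuinely block-diagonal action, allowing ``$G(\xi)$ conjugate into $\CB$ for each $\xi$'' to upgrade to ``$\rho_p(\Gamma_0)$ conjugate into $\CB$''), and simultaneously what makes the relevant characters dense in $\BS$, so that Lemma~\ref{deformation2} can be fed. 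One should also bear in mind that the deformation lemmas require a finite generating set, which is available in the setting where this proposition is applied.
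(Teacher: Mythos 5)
Your proof is correct and follows essentially the same route as the paper: admissibility forces $\Gamma_0$ to act trivially on the deck group, Lemma~\ref{nschains} (in contrapositive) transfers virtual solvability from $\rho_p(\Gamma_0)$ to the chain-space action, Observation~\ref{blockmatrix} gives the block-diagonal picture with blocks $M_\gamma(\xi)$, and the deformation lemma upgrades ``conjugate into $\CB$ at $\xi=1$'' plus ``virtually solvable on a dense set'' to ``conjugate into $\CB$ everywhere on $\BS$''; the paper packages the last step as Lemma~\ref{uppertriangularmagnus}. Your final reassembly step, explicitly intersecting the $\Gamma_0$-invariant complete flag on $\CC_p$ with the invariant subspace $H_1(R',\BC)$ and refining, is slightly more careful than the paper's write-up, which glosses over the passage from triangularizability on the chain space back down to triangularizability on first homology.
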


\begin{proof}
Since $p$ is an admissible cover, every character of the deck group of $p$ is $\Gamma_0$ invariant. Indeed, the deck group of $p$ is a subgroup of $H_1(\Gamma_0 \rtimes F_n)/ \tup{torsion}$, and the action of $\Gamma_0$ in this group $\Gamma_0 \rtimes F_n$ is given by conjugation and hence acts trivially on first homology. By Lemma \ref{nschains}, the image of $\Gamma_0$ in $\tup{GL}(H_1(\CC_p, \BC))$ must be virtually solvable. By Observation \ref{blockmatrix}  the image of $\Gamma_0$ in $\tup{GL}(H_1(\CC_p, \BC))$ can be written in block matrix notation as a diagonal matrix with blocks of the form $\Gamma_0(\xi)$ along the diagonal. By Lemma \ref{uppertriangularmagnus}, we can find a basis in which each such block is upper triangular. This proves the result. 
 
\end{proof}

\section{Finding eigenvalues off the unit circle using admissible covers}
\nid Our goal in this section is to prove the following proposition. 
\begin{prop} \label{eigoffuc}  Suppose $\Gamma_0 \leq \tup{Mod}(\Sigma)$ is finitely generated, not virtualy solvable and has the properties that $(\Gamma_0)_* \leq \tup{GL}(H_1(\Sigma, \BC))$ can be conjugated into $\CU$ and and that $\rho_p(\Gamma_0)$ is virtually solvable for every $\Gamma_0$ admissible $p$. Suppose $h \in [\Gamma_0, \Gamma_0]$ is a pseudo-Anosov mapping class. Then there exists a sequence of finite covers $\Sigma_k \to \ldots  \to \Sigma_0 = \Sigma$  such that the following hold: 
\begin{enumerate}
\item The group $\Gamma_0$ lifts to each cover. 
\item The cover $\Sigma_{i+1} \to \Sigma_i$ is $\Gamma_0$-admissible for every $i$. 
\item The endomorphism $h_*: H_1(\Sigma_k, \BC) \to H_1(\Sigma_k, \BC)$ has eigenvalues off the unit circle. 

\end{enumerate}
\end{prop}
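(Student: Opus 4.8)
The plan is to build the tower of covers by modifying the construction from the proof of Theorem \ref{specrad} in \cite{eigoffuc}, but carrying along the extra requirement that every stage be $\Gamma_0$-admissible and that $\Gamma_0$ lift throughout. The starting point is the contradiction hypothesis: assume that for \emph{every} $\Gamma_0$-admissible cover $p$, the endomorphism $h_* : H_1(\Sigma', \BC) \to H_1(\Sigma', \BC)$ has all eigenvalues on the unit circle. By Proposition \ref{prop1} (whose hypotheses are satisfied here, since $(\Gamma_0)_*$ is conjugate into $\CU \subset \CB$ and $\rho_p(\Gamma_0)$ is virtually solvable for all admissible $p$), this forces $\rho_p(\Gamma_0)$ itself to be conjugate into $\CB$ for every $\Gamma_0$-admissible $p$; since $h \in [\Gamma_0,\Gamma_0]$ and $[\CB,\CB]=\CU$, the matrix $\rho_p(h)$ is conjugate into $\CU$, so in particular it is unipotent and all its eigenvalues are exactly $1$, not merely on the unit circle. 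This strengthening from ``on the unit circle'' to ``unipotent'' for every admissible cover simultaneously is the key leverage the admissibility machinery of Section \ref{hriac} buys us.

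\textbf{Next I would} run the Perron–Frobenius / train-track argument from \cite{eigoffuc} for the pseudo-Anosov $h$. Recall the mechanism there: using the singular foliations of $h$ one produces a sequence of finite covers in which a lift of $h$ acts on $H_1$ with spectral radius $>1$, essentially because one can detect, in a suitable cover, a homologically nontrivial ``loop'' on which $h$ expands at the dilatation rate. The subtlety is that the covers produced in \cite{eigoffuc} are abelian (solvable deck group) covers but need not a priori be $\Gamma_0$-admissible, i.e. they need not be pullbacks of finite-index subgroups of $H_1(\Gamma_0 \rtimes F_n, \BZ)/\mathrm{torsion}$. I would handle this by intersecting: at each stage, instead of the cover $R'$ produced by the argument, pass to the cover corresponding to $K_{R'} \cap K_{\mathrm{adm}}$, where $K_{\mathrm{adm}}$ runs over an appropriate cofinal family of $\Gamma_0$-admissible subgroups. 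Passing to a further finite abelian cover only refines the spectrum-detecting argument — it cannot destroy an eigenvalue off the unit circle once one has appeared (this is exactly the content of Lemma \ref{nschains} together with the fact that $H_1$ of the smaller cover surjects compatibly), so the construction survives intersection with admissible covers. One must also check that $\Gamma_0$ lifts to each such intersected cover; since $\Gamma_0$ acts trivially on $H_1(\Gamma_0 \rtimes F_n)/\mathrm{torsion}$, it preserves every $\Gamma_0$-admissible subgroup automatically, and the covers from \cite{eigoffuc} are already $\Gamma_0$-invariant by construction (or can be replaced by their $\Gamma_0$-orbit intersection), so condition (1) holds.

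\textbf{The main obstacle} I expect is reconciling the two competing demands on the tower: the \cite{eigoffuc} construction wants freedom to choose covers adapted to the foliation data of $h$, while admissibility pins the covers down to a rigid family determined by $\Gamma_0$. The resolution I have in mind is that the \cite{eigoffuc} argument is robust under passage to deeper abelian covers — it only needs \emph{some} homology class in \emph{some} finite abelian cover witnessing exponential growth — so one argues that the admissible covers are ``rich enough'': since admissible covers include all covers pulled back from finite quotients of $H_1(\Gamma_0 \rtimes F_n)/\mathrm{torsion}$, and this group is large (it contains $H_1(F_n)$ as the $\pi_1$-part contributes all of $\BZ^n$ modulo the $\Gamma_0$-action, which is trivial on homology), the admissible family is cofinal among finite abelian covers of $\Sigma$ in the relevant sense. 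Thus the eigenvalue-producing cover from \cite{eigoffuc} can be replaced by an admissible one dominating it. Assembling the finitely many stages used by the \cite{eigoffuc} argument into the tower $\Sigma_k \to \cdots \to \Sigma_0 = \Sigma$, with each step abelian hence $\Gamma_0$-admissible (condition (2)) and each step preserving $\Gamma_0$ (condition (1)), and with $h_*$ on $H_1(\Sigma_k,\BC)$ acquiring an eigenvalue off the unit circle (condition (3)), contradicts the hypothesis that launched the argument unless Theorem \ref{theorem1} holds — which is precisely Proposition \ref{eigoffuc}. The one genuinely technical point to nail down is that ``abelian cover to which $\Gamma_0$ lifts'' plus ``pullback from $H_1(\Gamma_0\rtimes F_n)$'' are compatible constraints that can be met simultaneously at every stage, which I would verify by an explicit description of $H_1(\Gamma_0 \rtimes F_n, \BZ)$ via the Lyndon–Hochschild–Serre spectral sequence or the semidirect-product formula $H_1(\Gamma_0 \rtimes F_n) = (H_1(\Gamma_0) \oplus H_1(F_n))_{\mathrm{coinv}}$, showing its torsion-free quotient surjects onto $H_1(F_n,\BZ) = \BZ^n$.
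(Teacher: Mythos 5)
Your high-level framing is right: you want to re-run the machinery of \cite{eigoffuc} while forcing every cover in the tower to be $\Gamma_0$-admissible, and Proposition \ref{prop1} plus $[\CB,\CB]=\CU$ is indeed how the hypothesis ``$\rho_p(\Gamma_0)$ virtually solvable for all admissible $p$'' gets leveraged. But the step where you transfer the \cite{eigoffuc} covers into the admissible family is the whole difficulty, and the argument you offer for it does not work.

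The key problem is your cofinality claim. You assert that because $\Gamma_0$ ``acts trivially on homology'', the image of $F_n$ in $H_1(\Gamma_0\rtimes F_n,\BZ)/\mathrm{torsion}$ is all of $\BZ^n$, so $\Gamma_0$-admissible covers are cofinal among finite abelian covers. This misreads the hypothesis: $(\Gamma_0)_*$ is only assumed to be conjugate into $\CU$, i.e.\ \emph{unipotent}, not trivial. The image of $H_1(F_n)$ in $H_1(\Gamma_0\rtimes F_n)$ is the $\Gamma_0$-coinvariants $(H_1(F_n))_{\Gamma_0}$, which for a nontrivially unipotent action is a proper quotient. Since $\langle h\rangle\leq\Gamma_0$, the $\Gamma_0$-admissible covers form a (typically strictly) smaller family than the $\langle h\rangle$-admissible covers used in \cite{eigoffuc}, and are not cofinal among abelian covers. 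Consequently, ``intersecting'' a cover from \cite{eigoffuc} with an admissible cover does not yield an admissible cover, and there is no guarantee that an admissible cover dominating the \cite{eigoffuc} one exists at all. Lemma~\ref{nschains} (which compares chains to cycles for a single cover) also does not give you the monotonicity claim you want — that eigenvalues off the unit circle persist in further covers is true for transfer maps on graphs, but that is not what Lemma~\ref{nschains} says, and even granting it you still need the refined cover to be admissible.

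What your proposal leaves untouched is exactly where the paper's proof actually lives. The paper isolates three properties of $\BZ\rtimes_h F_n$ used in the proof of Theorem~\ref{specrad} and re-establishes each for $\Gamma_0\rtimes F_n$. Property (1), residual torsion-free nilpotence, is handled by Koberda's theorem together with a re-engineering of the vertex-stabilization lemma (Lemma~\ref{stabilization}): instead of iterated $p$-homology covers of the mapping torus, one takes iterated $(p,q)$-covers of the \emph{semidirect product}, and a prime-avoidance argument shows each stage can be made $\Gamma_0$-admissible — none of which is visible in your sketch. Property (3), the growth of the equivariant shadow in admissible covers, is proved by a separate argument: passing to an admissible cyclic $\BZ/p\BZ$ cover, applying Proposition~\ref{prop1} \emph{in the cover} to conjugate $(\Gamma_0)_*$ into $\CB$, using the transfer splitting $H_1(\Sigma',\BC)\cong V\oplus W$, and decomposing into $\BQ$-irreducible $\BZ/p\BZ$-representations to force $\dim V\geq p-1$. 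Your proposal never produces these estimates. So while you have correctly identified the overall strategy and the role of Proposition~\ref{prop1}, the technical core — making the vertex-stabilization and shadow-growth arguments run over $\Gamma_0$-admissible covers rather than $\langle h\rangle$-admissible ones — is missing, and the shortcut you suggest in its place is not valid.
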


\nid Our proof here is almost entirely identical to our proof of the following Theorem \ref{specrad}. One of the central notions in our proof of \ref{specrad} is the notion of a \emph{vertex subgraph of the transition graph}. We provide some basic definitions related to vertex subgraphs. These will allow us to explain the minor adjustments necessary to convert the proof of Theorem \ref{specrad} to a proof of Proposition \ref{eigoffuc}.  Further details can be found in \cite{eigoffuc}.

\subsection{The transition graph of $h$} As before, we can view $h$ as an element of $\tup{Aut}(\pi_1(\Sigma, *)) \cong \tup{Aut}(F_n)$. Let $X$ be a \emph{train track graph} for $h$, and $\varphi: X \to X$ be a train track representative (see \cite{BeH}), with base point $b \in X$. Pick an orientation on each edge of $X$.

Construct a directed graph $\CT$ called \emph{the transition graph of $h$} in the following way. The vertex set $V(\CT)$ is the set of edges of $X$. Let $e, e'$ be two vertices of $\CT$. Connect $e$ to $e'$ with $m$ edges emanating from $e$, where $m$ is the number of times the path $\varphi(e)$ passes through $e'$. 

Every edge $\eta$ of $\CT$ connecting $e$ to $e'$ corresponds to an $e'$-sub-edge of the path $\varphi(e)$. Let $\textbf{s}(\eta) = 1$ if this sub-edge is traversed in the positive direction and $\textbf{s}(\eta) = -1$ if it is traversed in the negative direction. Similarly, every path $\pi$ of length $k$ in $\CT$ connecting $e$ to $e'$ corresponds to an  $e'$-sub-edge of the path $\varphi^k(e)$. Denote $\textbf{s}(\pi) = 1$ if this sub-edge is traversed in the positive direction and $\textbf{s}(\pi) = -1$ if it is traversed in the negative direction.

If $\eta$ is an edge connecting $e$ to $e'$, we can write $\varphi(e)$ as a concatenation $\varphi(e) = a (e')^{\textbf{s}(\eta)} b$. Define the \emph{path corresponding to $\eta$} as $\textbf{p}(\eta) = a$ if $\textbf{s}(\eta) = 1$ and $\textbf{p}(\eta) = a (e')^{-1}$ otherwise. Similarly, we can define $\textbf{p}(\pi)$ where $\pi$ is a path in $\CT$ connecting $e$ to $e'$.

Let $G = \Gamma_0 \rtimes F_n$, and let $i: F_n \to G$ be the obvious map. Let $K_\Gamma$ be the kernel of the composition $F_n \to G \to H_1(G, \BQ)$. The cover of $X$ corresponding to $K_\Gamma$ is called the \emph{$\Gamma$-equivariant universal abelian cover of X}. We denote this cover $\wt{X}_\Gamma$.  Denote the deck group of this cover by $H$.  

Fix a preferred lift $\wt{b}$ of the base point $b$ to $\wt{X}_\Gamma$. For every vertex $c$ of $X$, pick a minimal length path connecting that vertex to $b$, and take a lift of that path to $\wt{X}_\Gamma$ originating at $\wt{b}$. Associate the terminal point of this path with the element $0 \in H$. Extend this $H$-equivariantly to an association of an element of $H$ to every vertex $v$ of $\wt{X}_\Gamma$. Call this association $\textbf{a}(v)$. Given a path $\pi$ in $\CT$, define the \emph{translation of $\pi$} to be $\textbf{t}(\pi) = \textbf{a}(\omega) - \textbf{a}(\alpha)$ where $\alpha$ and $\omega$ are the initial and terminal vertices respectively of a lift of  the path $\textbf{p}(\pi)$ to $\wt{X}_\Gamma$. Define the \emph{normalized translation of $\pi$} to be the element $\textbf{t}_n(\pi) = \frac{1}{\textup{length}(\pi)} \textbf{t}(\pi) \in H \otimes \BR$.

\subsection{Vertex subgraphs of the transition graph}

Construct the $m \times m$ matrix $A_\varphi \in M_m(\BC[H])$ by setting $$(A_\Gamma)_{i,j} = \sum_
\eta \textbf{s}(\eta) \textbf{t}(\eta) $$

\nid where the sum is taken over all edges $\eta$ connecting $e_i$ to $e_j$. For any $k$ we have that $$\trace A_\Gamma^k = \sum_\gamma \textbf{s}(\gamma) \textbf{t}(\gamma)$$

\nid where the sum is taken over all based cycles in $\CT$ of length k. Let $S_k \subset H \otimes \BR$ be $\frac{1}{k}$ times the support of $\trace{A}_\Gamma^k$. In \cite{eigoffuc}, we show that $S_k$ is contained in the convex hull of $\{\textbf{t}_n(\gamma)\}_\gamma$, where $\gamma$ ranges over all simple based cycles in $\CT$. The polygon given by the convex hull of these cycles is called the \emph{equivariant shadow of $\varphi$} and is denoted $\CS^e \varphi$. 

Given a vertex $v$ of the equivariant shadow, let $\CT_v$ be the union of all based cycles in $\CT$ whose normalized translation is $v$. In \cite{eigoffuc}, we show that $\textbf{t}_n(\gamma) = v$ if and only if $\gamma$ is a based cycle in $\CT_v$. The graph $\CT_v$ is called the \emph{vertex subgraph of $v$}. 

\subsection{Stabilizing vertex subgraphs}

A subgraph $\CT' \subset \CT$ is called \emph{stable} if $\sum_\gamma \textbf{s}(\gamma) \textbf{t}(\gamma) \neq 0$ for infinitely many values of $k$, where the sum is taken over all based cycles of length $k$ in $\CT'$. The vertex  $v$ of $\CS^e \varphi$ is called stable if $\CT_v$ is stable. Given a cover $p: X_0 \to X$ to which $\varphi$ lifts, the transition graph $\CT_0$ of a lift of $\varphi$ is a finite cover of $\CT$. The vertex $v$ is said to be \emph{stable in the cover $p$} if the lift of $\CT_v$ to $\CT_0$ is stable.

In Lemma 4.17 in \cite{eigoffuc}, we give similar definitions to the ones given above, except we take $G = \BZ \rtimes_h F_n$.  In this case, we prove that for every vertex $v$ of $\CS^e \varphi$, there is a finite cover where $v$ is stable. The construction provided in the proof of this lemma shows that this cover can be obtained by taking an iterated sequence of $\langle h^{m_i} \rangle$ admissible covers, where $m_i$ are sufficiently large integers. We wish to duplicate this lemma in our setting. 

\begin{lemma} \label{stabilization}
Let $v$ be a vertex of $\CS^e_\varphi$. Then there exists a sequence of finite covers $X_k \to \ldots \to X_1 \to X_0 = X$ such that the following hold: 
\begin{enumerate}
\item The group $\Gamma_0$ lifts to each cover. 
\item The cover $X_{i+1} \to X_i$ is $\Gamma_0$-admissible. 

\end{enumerate}

\end{lemma}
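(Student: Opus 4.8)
The plan is to adapt the construction from Lemma 4.17 of \cite{eigoffuc}, replacing the role there played by the cyclic group $\langle h \rangle$ with the finitely generated group $\Gamma_0$. Recall that in the $\BZ \rtimes_h F_n$ setting, the stabilization of a vertex $v$ is achieved by a sequence of $\langle h^{m_i} \rangle$-admissible covers for suitably large integers $m_i$; the key point of that construction is that passing to such a cover ``separates'' the based cycles in $\CT_v$ according to a finer homological invariant, so that after enough steps the relevant signed translation sum ceases to vanish identically in $k$. First I would set up the $\Gamma_0$-equivariant analogue: work with $G = \Gamma_0 \rtimes F_n$, the cover $\wt{X}_\Gamma$ corresponding to $K_\Gamma = \ker(F_n \to H_1(G, \BQ))$ with deck group $H$, and the matrix $A_\Gamma \in M_m(\BC[H])$ defined above, whose traces record the signed $H$-translations of based cycles. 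The vertex subgraph $\CT_v$ and its lifts to covers are defined exactly as before.

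The core inductive step goes as follows. Suppose $v$ is not yet stable in the current cover $X_i$ with transition graph $\CT_i$, i.e., $\sum_\gamma \mathbf{s}(\gamma)\mathbf{t}(\gamma) = 0$ for all large $k$, where $\gamma$ ranges over based cycles of length $k$ in the lift of $\CT_v$. As in \cite{eigoffuc}, one extracts from this vanishing a cancellation phenomenon: distinct based cycles of the same length pair up with opposite signs but equal translations. The construction of \cite{eigoffuc} then produces a finite-index subgroup of the relevant homology group — in the cyclic case an $\langle h^{m}\rangle$-admissible datum — whose associated cover distinguishes the cancelling cycles. In our setting the finite-index subgroup must be taken inside $H_1(\Gamma_0 \rtimes F_n, \BZ)/\mathrm{(torsion)}$, so that the resulting cover of $X$ is $\Gamma_0$-admissible by definition, and $\Gamma_0$ visibly lifts because $\Gamma_0$ acts trivially on this homology group (it acts by conjugation inside $\Gamma_0 \rtimes F_n$), so the subgroup $K$ is $\Gamma_0$-invariant and $\mathrm{Mod}_K$-invariance holds. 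One then checks that the ``complexity'' of $\CT_v$ — measured by the number of cancelling pairs, or equivalently the dimension of the span of the relevant cycle classes — strictly drops, so the process terminates after finitely many steps, yielding the tower $X_k \to \cdots \to X_0 = X$ with $v$ stable in $X_k$, and with each step $\Gamma_0$-admissible.

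I expect the main obstacle to be verifying that the combinatorial reduction argument of Lemma 4.17 — which in \cite{eigoffuc} is phrased for a cyclic deck group $\langle h \rangle$ and uses the $\BZ$-grading by $h$-exponent in an essential way — goes through verbatim when the ambient group is the nonabelian semidirect product $\Gamma_0 \rtimes F_n$ and $H$ is a general finitely generated abelian group rather than $\BZ$. Two points need care: (i) the termination of the induction, since the ``amount of cancellation'' must still be a well-founded invariant that strictly decreases, which should follow because $H$ is finitely generated abelian and each admissible cover refines the $H$-valued translation data by a genuine quotient; and (ii) the assertion that the covers produced are $\Gamma_0$-admissible and not merely admissible for some finite-index subgroup of $\Gamma_0$ — this is exactly why the construction is carried out inside $H_1(\Gamma_0 \rtimes F_n, \BZ)/\mathrm{torsion}$ rather than inside $H_1(F_n)$, and it is the technical heart of the adaptation. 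Everything else (the existence of a train track representative $\varphi$ for the pseudo-Anosov $h$, the identification of the support of $\trace A_\Gamma^k$ with convex combinations of normalized translations of simple cycles, and the characterization of $\CT_v$) is imported directly from \cite{eigoffuc} and requires no change.
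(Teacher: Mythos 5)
Your proposal identifies the right ambient setup ($G = \Gamma_0 \rtimes F_n$, working inside $H_1(G,\BZ)/\mathrm{torsion}$) and correctly flags the two places where care is needed, but it does not supply the arguments the paper actually uses, and in one place the concern you flag is real and not resolved by what you write.

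First, the paper does not re-run the combinatorial cancellation/``complexity drops'' argument. It observes that the only property of the group $G$ that the proof of Lemma 4.17 in \cite{eigoffuc} uses is that $G$ is residually torsion-free nilpotent, and then imports this for $G = \Gamma_0 \rtimes F_n$ by citing Koberda's theorem: if $(\Gamma_0)_*$ is conjugate into $\CU$ then $\Gamma_0 \rtimes F_n$ is residually torsion-free nilpotent. This is the engine that replaces the $\BZ$-grading you worry about in your point (i); without invoking it, your sketch of a descending ``amount of cancellation'' invariant is not clearly well-founded, and you would be re-proving (or at least re-examining) the internals of Lemma 4.17 rather than black-boxing it.

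Second, and more seriously, your point (ii) is flagged as ``the technical heart'' but then dismissed with the remark that it suffices to work inside $H_1(\Gamma_0 \rtimes F_n,\BZ)/\mathrm{torsion}$. That is not enough. The covers produced by the residual-nilpotence construction are iterated $(p_i,q_i)$-homology covers of $G$, whose $F_n$-level subgroups have the form $\Gamma_{q} \rtimes (F_n)_{p}$ with $\Gamma_q$ a \emph{proper} finite-index subgroup of $\Gamma_0$. After the first step, the deck group of $X_{i+1}\to X_i$ is a priori controlled by $\Gamma_{q_i}$-invariance in $H_1(\pi_1(X_i))$, not by $\Gamma_0$-invariance, so the cover is only visibly $\Gamma_{q_i}$-admissible. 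The paper's fix is an explicit adjustment: since for each generator $f_j$ of $\Gamma_0$ and all sufficiently large primes $q$ the $f_j$-invariant and $f_j^q$-invariant subspaces of $H_1((F_n)_{p_1 p_2},\BZ)$ coincide, one may (because the construction permits ``almost any'' primes) replace $q_{i+1}$ so that $\Gamma_{q_{i+1}}$-invariance agrees with $\Gamma_0$-invariance, whence $\Gamma_0$ acts trivially on the relevant deck group and the cover is genuinely $\Gamma_0$-admissible. This prime-adjustment argument is what makes the statement true for $\Gamma_0$ itself rather than just for a finite-index subgroup, and it is missing from your proposal.

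In short: cite Koberda for residual torsion-free nilpotence of $\Gamma_0 \rtimes F_n$ instead of re-deriving a termination invariant, and add the argument that large primes $q$ can be chosen so that $\Gamma_q$-invariant vectors coincide with $\Gamma_0$-invariant vectors at each stage, which is what delivers $\Gamma_0$-admissibility of every step of the tower.
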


\begin{proof}

The only property of $G$ that is used in the proof of Lemma 4.17 in \cite{eigoffuc} is that $G$ is residually torsion free nilpotent. In \cite{resprop}, Koberda proves that for any $K < \tup{Aut}(F_n)$ such that $K_* \leq \tup{GL}(H_1(F_n, \BC))$ can be conjugated into $\CU$, the group $K \rtimes F_n$ is residually torsion free nilpotent. Thus, we can always find a cover $X' \to X$ in which $v$ is stable. In order to  complete the proof, we need to show that this cover can be taken to be of the form stated in the theorem. 

Denote $A = H_1(G, \BZ) / \tup{torsion}$. In our proof of $\BZ \rtimes F_n$ version of the lemma, we show that there is a number $k$ such that for almost any sequence of primes $p_1, \ldots, p_k$, the vertex $v$ is stable in the cover we get by taking iterated $p_i$ homology covers of $G$ (that is, passing to covers corresponding to the kernels of the maps $A/ pA$. 

Write $H_1(G, \BZ) = M \oplus N$ where $M$ is the abelianization of $\Gamma$, and $N$ is the image of $F_n$ in $H_1(G,\BZ)$. For any $p,q$ the kernel of the map $G \to M/qM \oplus N/pN$ is a semi direct product $\Gamma_q \rtimes (F_n)_p$ where $\Gamma_q$ is the kernel of the map $\Gamma \to H_1(\Gamma, \BZ/q\BZ)$ and $(F_n)_p$ is the kernel of the map $F_n \to H_1(F_n, \BZ/p\BZ)$.  We call this a $(p,q)$-cover. The same proof  as our proof in \cite{eigoffuc} (except for replacing $p$ covers with $(p,q)$ covers  above) shows that there is a number $k$ such that for almost any collection of primes $p_1, \ldots, p_k, q_1, \ldots, q_k$  the vertex $v$ is stable in the cover obtained by taking iterated $(p_i, q_i)$-covers. In this way, we get a sequence of covers $X_k \to \ldots X_1 \to X$ such that $v$ is stable in $X_k$. It remains to show that these covers are $\Gamma_0$-admissible. 

Since conjugation acts trivially on abelianization, $\Gamma_0$ acts trivialy on the deck group of $X_1 \to X$.  Let $F_{1,2} = ((F_n)_{p_1})_{p_2}$, and   $\Gamma_0 = \langle f_1, \ldots, f_s \rangle$. The elements $f_i$ act on $H_1(F_{1,2}, \BZ)$. For all sufficiently large primes $q$, the $f_i$-invariant vectors in $H_1(F_{1,2}, \BZ)$ are the same as the $f_i^q$ invariant vectors for every $i$. Thus, if we replace $q_2$ with almost any other $q$ (as we are allowed to do), we have that the $\Gamma_q$ invariant vectors in $H_1(F_{1,2}, \BZ)$ are the same as the $\Gamma$-invariant vectors. Thus, the group $\Gamma$ acts trivially on the deck group of $X_2 \to X_1$, and the cover is $\Gamma_0$-admissible. Proceeding inductively in this manner now gives the result.

\end{proof}

\subsection{Proof of Proposition \ref{eigoffuc}}

\begin{proof}
Our proof of Theorem \ref{specrad} uses only three properties of $\BZ \rtimes_h \pi_1(\Sigma, \BZ)$. 
\begin{enumerate}
\item The group $\BZ \rtimes_h \pi_1(\Sigma, \BZ)$ is residually torsion free nilpotent. This is used to prove the vertex stabilization lemma. 
\item The equivariant shadow $\CS^e h$ is a $m$ dimensional polygon, where $m = \dim H_1(\BZ \rtimes_f F_n/B, \BQ)$, and $B$ is the normal subgroup generated by boundary components. This is a theorem of Fried (\cite{Friedzeta}). 
\item If $h$ does not have eigenvalues off the unit circle, then for every $K > 0$ there exists an admissible cyclic cover $\Sigma' \to \Sigma$ where the dimension of the equivariant shadow is at least $K$. 
\end{enumerate}

\nid The first property holds in our case for every by Lemma \ref{stabilization}. To see the second property, note first that the map $F_n \to H_1(\Gamma_0 \rtimes F_n, \BZ)$ factors through the map $F_n \to H_1(\BZ\rtimes_h F_n, \BZ)$. Let $F' = F_n / B$. Given a subset $Y \subset H_1(\BZ_h \rtimes_h F', \BZ)$ that spans the image of $F_n$ in this group, the image of $Y$ in $H_1(\Gamma \rtimes F', \BZ)$ will also span the image of $F_n$.The second property now follows in our case from Fried's theorem. 

We now turn to the third property. Since $H_1(F_n, \BZ)$ has $\Gamma_0$-invariant vectors, we have that $H_1(G, \BC) \neq 0$. In particular, the image of $F_n$ in $H_1(G, \BZ) /\tup{torsion}$ has co-cyclic subgroups of arbitrarily large size. 

There is a splitting $H_1(F_n, \BZ) \cong M \oplus N$, where $M$ is the image of $B$ in this group. After passing to some $\Gamma_0$-admissible cover, we can assume that $\Sigma$ has at least $2$ boundary components, so $M \neq 0$. Pick a prime $p$ and a map  $H_1(G, \BZ) /\tup{torsion} \to \BZ / p\BZ$ that sends every element of $N$ to $0$ and each boundary component to $1$. This corresponds to a  cover  $ \Sigma' \to \Sigma$ with cyclic deck group $\BZ/ p\BZ$ such that $\Sigma'$ has the same number of boundary components as $\Sigma$.  

By Proposition \ref{prop1}, the group $(\Gamma_0)_* \in \tup{GL}(H_1(\Sigma', \BC))$ can be conjugated into $\CB$.Let $V$ be the image of the transfer map from $H_1(\Sigma, \BC) \to H_1(\Sigma', \BC)$. Let $W =  \ker H_1(\Sigma',\BC) \to H_1(\Sigma, \BC)$, where the map between homology groups is the map induced by the covering space map. There is a $\Gamma_0$-invariant splitting $H_1(\Sigma', \BC) \cong V \oplus W$. Picking bases for these spaces and conjugating into upper triangular matrices, we see that the group $(\Gamma_0)_*$ has common eigenvalues that are not $\BZ/p\BZ$-invariant. Let $U$ be the space of $[\Gamma_0, \Gamma_0]$-invariant vectors. This space is defined over $\BQ$, and is invariant under the action of the deck group. Decomposing $V$ as a sum of representations of $\BZ/p\BZ$ representations that are irreducible over $\BQ$, and noting that not all of these representations are $1$ dimensional, we get that $\dim V \geq p - 1$. Since $\Sigma'$ has the same number of boundary components as $\Sigma$, this gives the third property. 

\end{proof}

\section{Proof of Theorem \ref{theorem1}}

\nid We begin by proving a slightly more restrictive form of Theorem \ref{theorem1}. 

\begin{prop} \label{prop3}
	Suppose $\Gamma \leq \tup{Mod}(\Sigma)$ is a non-solvable subgroup such that $\Gamma$ contains two independent pseudo-Anosov elements, and such that $\Gamma_* \leq \tup{GL}(H_1(\Sigma, \BC))$ is virtually solvable. Then There exists a finite cover $p: \Sigma' \to \Sigma$ such that $\rho_p(\Gamma_0)$ is not virtually solvable. 
	
\end{prop}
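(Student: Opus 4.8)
The plan is to argue by contradiction, assuming that $\rho_p(\Gamma_0)$ is virtually solvable for every finite cover $p$ of $\Sigma$ — where $\Gamma_0 \le \Gamma$ is the finitely generated subgroup constructed as follows — and to contradict this by combining Propositions \ref{prop1} and \ref{eigoffuc}. First I would reduce: since $\Gamma_*$ is a virtually solvable linear group, the identity component of its Zariski closure is connected solvable, hence conjugate into $\CB$; pulling back to $\Gamma$ the intersection of this component with $\Gamma_*$ gives a finite-index subgroup $\Gamma' \le \Gamma$ with $\Gamma'_* \subseteq \CB$ in a suitable basis of $H_1(\Sigma,\BC)$. Let $f, g \in \Gamma$ be two independent pseudo-Anosov elements; for $N$ large the group $\langle f^N, g^N\rangle$ is free of rank two and all of its nontrivial elements are pseudo-Anosov (a standard consequence of the north--south dynamics of $f,g$ on $\mathcal{PML}(\Sigma)$), so $F := \Gamma' \cap \langle f^N, g^N\rangle$ is free of rank $\ge 2$ with the same property, and $[F,F] \subseteq [\Gamma',\Gamma']$ is a nonabelian free group, every nontrivial element of which is pseudo-Anosov. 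I would then choose $h_1, h_2 \in [F,F]$ generating a free subgroup of rank two, set $\Gamma_0 := \langle h_1, h_2\rangle$, and put $h := [h_1,h_2] \in [\Gamma_0,\Gamma_0]$. Then $\Gamma_0$ is finitely generated and not virtually solvable, $h$ is pseudo-Anosov, and $\Gamma_0 \subseteq [\Gamma',\Gamma']$ forces $(\Gamma_0)_* \subseteq [\CB,\CB] = \CU$; together with the standing assumption (which gives that $\rho_p(\Gamma_0)$ is virtually solvable for every $\Gamma_0$-admissible $p$), this means $\Gamma_0$ and $h$ satisfy the hypotheses of both Proposition \ref{prop1} and Proposition \ref{eigoffuc}.

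Applying Proposition \ref{eigoffuc} to $\Gamma_0$ and $h$ yields a tower of finite covers $\Sigma_k \to \cdots \to \Sigma_0 = \Sigma$ such that $\Gamma_0$ lifts to each $\Sigma_i$, each step $\Sigma_{i+1} \to \Sigma_i$ is $\Gamma_0$-admissible, and $h_* \colon H_1(\Sigma_k,\BC) \to H_1(\Sigma_k,\BC)$ has an eigenvalue off the unit circle. To get a contradiction I would prove, by induction on $i$, that $\rho_{\Sigma_i \to \Sigma}(\Gamma_0)$ is conjugate into $\CB$. The case $i = 0$ is immediate since $(\Gamma_0)_* \subseteq \CU$. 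For the inductive step the key remark is that the image of $\Gamma_0$ in $\tup{GL}(H_1(\Sigma'',\BC))$ depends only on the cover $\Sigma''$ and not on any intermediate base: hence for a $\Gamma_0$-admissible cover $q$ of $\Sigma_i$ the group $\rho_q(\Gamma_0)$ is virtually solvable by the standing assumption (as $q$ composed with $\Sigma_i \to \Sigma$ is a finite cover of $\Sigma$ to which $\Gamma_0$ lifts), while $(\Gamma_0)_*$ on $H_1(\Sigma_i,\BC)$ is conjugate into $\CB$ by the inductive hypothesis. Proposition \ref{prop1}, applied to $\Gamma_0$ acting on $\pi_1(\Sigma_i)$ (the statement and all the lemmas behind it hold for an arbitrary finitely generated free group), then gives that $\rho_q(\Gamma_0)$ is conjugate into $\CB$ for every $\Gamma_0$-admissible cover $q$ of $\Sigma_i$; taking $q = (\Sigma_{i+1}\to\Sigma_i)$ and using base-independence again closes the induction.

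At the top we obtain $\rho_{\Sigma_k \to \Sigma}(\Gamma_0) \subseteq \CB$, hence $\rho_{\Sigma_k \to \Sigma}([\Gamma_0,\Gamma_0]) \subseteq [\CB,\CB] = \CU$, so $h_* = \rho_{\Sigma_k \to \Sigma}(h)$ has every eigenvalue equal to $1$ — contradicting the previous paragraph. Therefore the standing assumption is false: some finite cover $p$ has $\rho_p(\Gamma_0)$ not virtually solvable, and since $\Gamma_0 \le \Gamma$ the same cover shows $\rho_p(\Gamma)$ is not virtually solvable.

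The hardest parts are the two places where the argument is not purely mechanical. The first is the reduction in the opening paragraph: I need a finitely generated, non virtually solvable subgroup lying in the second derived subgroup of a finite-index subgroup of $\Gamma$ — so that its homological image is forced into $\CU$ rather than merely into $\CB$ up to finite index — while still containing a pseudo-Anosov element in its own commutator subgroup. This is exactly where the hypothesis of two independent pseudo-Anosov elements is used: it provides a free subgroup of purely pseudo-Anosov mapping classes sitting arbitrarily deep in the derived series, from which both $\Gamma_0$ and $h$ can be harvested. The second, and the main obstacle, is the propagation of upper-triangularity up the tower: a tower of abelian covers is not itself an abelian cover, and Proposition \ref{prop1} is genuinely a statement about abelian covers (via Observation \ref{blockmatrix} and Lemma \ref{nschains}), so it cannot be applied directly to the composite cover $\Sigma_k \to \Sigma$. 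The remedy — re-applying Proposition \ref{prop1} one level at a time, using that the homological representation of $\Gamma_0$ on a cover is insensitive to the intermediate base — must be justified carefully, including checking that the hypotheses of Proposition \ref{prop1} (and hence of Lemmas \ref{deformation2} and \ref{uppertriangularmagnus} behind it) really do transfer to each intermediate surface.
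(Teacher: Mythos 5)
Your proof is correct and follows the same overall strategy as the paper: pass to a finite-index subgroup with $\CB$-triangular homological image, construct a finitely generated non--virtually-solvable $\Gamma_0$ inside its commutator subgroup (so that $(\Gamma_0)_* \subseteq \CU$) together with a pseudo-Anosov $h \in [\Gamma_0,\Gamma_0]$, then run Propositions~\ref{eigoffuc} and~\ref{prop1} up a tower of admissible covers to get contradictory conclusions about $\rho_{\Sigma_k\to\Sigma}(h)$. Two cosmetic differences are worth noting. First, to harvest pseudo-Anosovs deep in the derived series the paper invokes Fujiwara's theorem (that $f^r g^s$ is pseudo-Anosov for $r,s$ large), whereas you use the classical ping-pong/north--south dynamics fact that $\langle f^N, g^N\rangle$ is purely pseudo-Anosov free for $N$ large; both routes produce the same package (a free rank-two $\Gamma_0$ in $[\Gamma',\Gamma']$ with a pseudo-Anosov commutator $h$). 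Second, where the paper writes only ``by repeated application of Proposition~\ref{prop1}, the group $\rho_{\Sigma_k\to\Sigma}(\Gamma_0)$ can be conjugated into $\CB$,'' you make this precise as an induction along the tower, noting that Proposition~\ref{prop1} applies to $\Gamma_0 \le \tup{Aut}(\pi_1(\Sigma_i))$ at each stage (since each $\pi_1(\Sigma_i)$ is free), that the base-of-the-tower assumption transfers because the homological representation on $\Sigma_{i+1}$ is independent of the intermediate base, and that the inductive hypothesis supplies the $\CB$-conjugacy needed at level $i$. This is a genuine gap-filling of the paper's terse phrasing, and is the nontrivial part of the argument; your treatment of it is right.
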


\begin{proof}
	Since $\Gamma_*$ is virtually solvable, it contains a finite index subgroup that can be conjugated into $\CB$. Replace $\Gamma$ with this subgroup (note that the theorem still holds if we prove it for a non virtually solvable subgroup of $\Gamma$. 
	
	By a theorem of Fujiwara (\cite{Fujiwara}), if $f,g$ are two independent pseudo-Anosov mapping classes then $f^r g^s$ is pseudo-Anosov for all sufficiently large $r,s$. Let $\Gamma_0= [\Gamma,\Gamma]$. Fujiwara's theorem implies that $\Gamma_0$ contains two independent pseudo-Anosovs. Replace $\Gamma_0$ with some finitely generated subgroup generated by independent pseudo-Anosovs. Fujiwara's theorem implies that $[\Gamma_0, \Gamma_0]$ contains pseudo-Anosov mapping classes. Let $h$ be such an element. 
	
	The group $(\Gamma_0)_*$ can be conjugated into $\CU$. If Theorem \ref{theorem1} does not hold, then by Proposition \ref{eigoffuc}  there is a tower of $\Gamma_0$-admissible covers $\Sigma_k \to \ldots \to \Sigma_0 = \Sigma$ such that $\rho_{\Sigma_k \to \Sigma}(h)$ has eigenvalues off the unit circle. By repeated application of Proposition \ref{prop1}, the group $\rho_{\Sigma_k \to \Sigma}(\Gamma_0)$ can be conjugated into $\CB$. But $[\CB, \CB] = \CU$, which contradicts our assumption that $\rho_{\Sigma_k \to \Sigma}(h)$ has eigenvalues off the unit circle.

\end{proof}

\begin{proof}
We begin with several reduction steps that will allow us to apply Proposition \ref{prop3}. Every non virtually solvable subgroup of $\tup{Mod}(\Sigma)$ contains two mapping classes of positive entropy that generate a non virtually solvable group. Replace $\Gamma$ with such a subgroup. 

By passing to a finite index subgroup, we can assume that every element of $\Gamma$ is a pure mapping class - that is, for every $f \in \Gamma$  there is a multicurve  $\CC \subset \Sigma$ such that each connected component of $\Sigma \setminus \CC$ is $f$ invariant, and $f$ restricted to each such component is either trivial or a pseudo-Anosov mapping class. We call the collection of components where $f$ is pseudo-Anosov the \emph{pseudo-Anosov support of $f$}. 

Let $f,g$ be two pure mapping classes and suppose that $\Sigma_f \subset \Sigma$ is in the \emph{pseudo-Anosov} support of $f$, and $\Sigma_g \subset \Sigma$ is in the pseudo-Anosov support of $g$. Suppose further that $\Sigma_f \cap \Sigma_g$ is neither empty nor a union of annuli. The some element of $\langle f,g \rangle$ is a pure mapping class group whose pseudo-Anosov support contains a surface that contains $\Sigma_f \cup \Sigma_g$ (see Theorem 6.1 in Clay, Leininger, and Mangahas' paper \cite{clm}). This implies that there is an essential subsurface $S \subset \Sigma$ and a subgroup $\Gamma' < \Gamma$ such that $S$ is $\Gamma'$-invariant and $\Gamma'$ contains two elements whose restrictions to $S$ are two independent pseudo-Anosov mapping classes. Replace the group $\Gamma$ with $\Gamma'$. 
 
 By a theorem of Marshall Hall (\cite{Hall}), there is a finite cover $\wt{\Sigma} \to \Sigma$, and a lift $\wt{S}'$ of $S$ to $\wt{\Sigma}$ such that $\pi_1(\wt{S}, *)$ is a free factor of $\pi_1(\wt{\Sigma}, *)$, for some point $*$. A finite index subgroup of $\Gamma$ fixes this lift. Replace $\Gamma$ with this finite index subgroup. 
 
 Apply Proposition \ref{prop3} to the surface $\wt{S}$, together with the group $\Gamma|_{\wt{S}}$, to get a finite regular cover $p$ of $\wt{S}$ to which $\Gamma|_{\wt{S}}$ lifts, such that the image of $\Gamma|_{\wt{S}}$ under the corresponding homological representation is not virtually solvable. 
 
 Since $\pi_1(\wt{S}, *)$ is a free factor of $\pi_1(\wt{\Sigma})$, the cover $p$ can be extended to a cover of all of $\wt{\Sigma}$. Again, since $\pi_1(\wt{S}, *)$ is a free factor of $\pi_1(\wt{\Sigma})$, $H_1(p^{-1}(\wt{S}), \BC)$ injects into $H_1(p^{-1}(\wt{\Sigma}), \BC)$ as a $\rho_p(\Gamma)$-invariant subspace. Restriction to this subspace gives a non virtually solvable image of $\rho_p(\Gamma)$, and hence $\rho_p(\Gamma)$ itself is not virtually solvable.

\end{proof}

\vspace {10mm}


\end{document}